\newtheorem{corollary}{Corollary}
\newtheorem{remark}{Remark}
\newtheorem{lemma}{Lemma}
\newtheorem{theorem}{Theorem}[section]
\title{\textbf{Bernstein-type Inequalities Preserved by Modified Smirnov Operator}}
\author{{Ishfaq Ahmad Wani$^{1}$, Abdul Liman$^{2}$}}
\affil{Department of Mathematics, National Institute of Technology, Srinagar, India, 190006.\\
Email Id: ishfa\textunderscore 2022phamth006@nitsri.ac.in, aliman@nitsri.ac.in}
\date{}
\begin{document}
	
	\maketitle
	
	\begin{abstract}
		In this paper, we consider a modified version of Smirnov operator and obtain some Bernstein-type inequalities preserved by this operator. In particular, we prove some compact generalizations of the well-known inequalities of Bernstein, Erd\"{o}s and Lax, Ankeny and Rivlin and others.
	\end{abstract}
		\textbf{Key words and phrases}: Modified Smirnov operator, Polynomials, Bernstein inequality, Restricted
	zeros.\\
		\textbf{2020 Mathematics Subject Classification}: 30A10, 30C10, 30C15, 30C80.

	\section{\protect \bigskip Introduction}
	Let $\mathbb{P}_{n}$ denote the class of polynomials $P(z):=\sum_{j=0}^{n}a_jz^j$ in $\mathbb{C}$ of degree at most  $n \in \mathbb{N}$. Let $\mathbb{D}$ be the open unit disk $\left\{z \in \mathbb{C};|z|<1\right\}$, such that $\overline{\mathbb{D}}$ is the closure of $\mathbb{D}$ and $B(\mathbb{D})$ denotes its boundary.\\
	Let $P \in \mathbb{P}_{n}$, then 
	\begin{eqnarray}\label{1}
		\max_{z \in B(\mathbb{D})}|P^{\prime}(z)|\le n \max_{z \in B(\mathbb{D})}|P(z)|
	\end{eqnarray}
	and
	\begin{eqnarray}\label{2}
		\max_{z \in B(\mathbb{D})}|P(Rz)|\le R^{n} \max_{z \in B(\mathbb{D})}|P(z)|.
	\end{eqnarray}
	Inequality (\ref{1}) is a well-known theorem of Bernstein \cite{SB}. The inequality (\ref{2}) is a simple deduction from the maximum modulus principle. In both the inequalities, the equality holds for $P(z)=\alpha{z^{n}}, \alpha\ne 0$.\\
	If we restrict to a class of polynomials having no zeros in $\mathbb{D}$, the inequalities (\ref{1}) and (\ref{2}) can be sharpened. In fact, if $P(z)\ne 0$ in $\mathbb{D}$, then \\
	\begin{eqnarray}\label{3}
	\max_{z \in B(\mathbb{D})}|P^{\prime}(z)|\le \frac{n}{2} \max_{z \in B(\mathbb{D})}|P(z)|
	\end{eqnarray}
	and for $R>1,$
	\begin{eqnarray}\label{4}
	\max_{z \in B(\mathbb{D})}|P(Rz)|\le \frac{R^{n}+1}{2} \max_{z \in B(\mathbb{D})}|P(z)|.
	\end{eqnarray}
	Inequality (\ref{3}) was proved by Erd\"{o}s and Lax \cite{EL8}, whereas Ankeny and Rivlin \cite{AR1} used (\ref{3}) to prove (\ref{4}). These inequalities were further improved by Aziz and Dawood \cite{AD2}, where under the same hypothesis, it was proved that
	\begin{eqnarray}
	\max_{z \in B(\mathbb{D})}|P^{\prime}(z)|\le \frac{n}{2}\left\{\max_{z \in B(\mathbb{D})}|P(z)|- \min_{z \in B(\mathbb{D})}|P(z)|\right\}
	\end{eqnarray}
	and for $R>1$
	\begin{equation}\label{6}
	\max_{z\in B(\mathbb{D})}|P(Rz)|\le \left\{\frac{R^n+1}{2}\right\}\max_{z \in B(\mathbb{D})}|P(z)|-\left\{\frac{R^n-1}{2}\right\} \min_{z \in B(\mathbb{D})}|P(z)|.
	\end{equation}
	The equality in (\ref{3})-(\ref{6}) holds for the polynomials of the form $P(z)=\alpha {z}^n+\beta$, with $|\alpha|=|\beta|$.\\
	
	In 1930 Bernstein \cite{BB4} also proved the following result:
	\begin{theorem} \label{Theorem 1.1}
	Let $P(z)$ be a polynomial in $\mathbb{P}_{n}$ having all zeros in $\overline{\mathbb{D}}$ and $p(z)$ be a polynomial of degree not exceeding that of $P(z)$. If $|p(z)|\le|P(z)|$ on $B(\mathbb{D})$, then \\
	$$|p^{\prime}(z)|\le|P^{\prime}(z)| ~~for~~ z\in{\mathbb{C}\setminus{\mathbb{D}}}.$$ 	The equality holds only if $p=e^{i\gamma}P,\gamma \in \mathbb{R}$.
	\end{theorem}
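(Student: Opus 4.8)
The plan is to reduce the derivative inequality to a statement about the location of zeros, which can then be settled by the Gauss--Lucas theorem together with the maximum modulus principle. First I would record the exterior domination bound: since $P$ has all its zeros in $\overline{\mathbb{D}}$, the ratio $f=p/P$ is analytic in the region $\{|z|>1\}\cup\{\infty\}$ (the degree condition $\deg p\le\deg P$ guarantees a finite limit at infinity), and on $B(\mathbb{D})$ the hypothesis gives $|f|\le 1$ (at any boundary zero of $P$ the bound $|p|\le|P|$ forces $p$ to vanish, so the singularity is removable and the bound persists by continuity). The maximum modulus principle applied to the exterior domain then yields $|p(z)|\le|P(z)|$ for all $|z|\ge 1$.

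Next, fix any $\lambda$ with $|\lambda|>1$ and consider $G_\lambda:=p-\lambda P$. If $G_\lambda(z_0)=0$ for some $z_0$ with $|z_0|>1$, then $p(z_0)=\lambda P(z_0)$; since $P(z_0)\ne 0$ (all zeros of $P$ lie in $\overline{\mathbb{D}}$), this gives $|p(z_0)|=|\lambda|\,|P(z_0)|>|P(z_0)|$, contradicting the exterior bound. Hence every zero of $G_\lambda$ lies in $\overline{\mathbb{D}}$, which is convex, so by the Gauss--Lucas theorem every zero of $G_\lambda'=p'-\lambda P'$ also lies in $\overline{\mathbb{D}}$. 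Consequently $p'(z)-\lambda P'(z)\ne 0$ whenever $|z|>1$, for every $|\lambda|>1$. The same application of Gauss--Lucas to $P$ shows $P'(z)\ne 0$ for $|z|>1$, so, fixing such a $z$, the quantity $p'(z)/P'(z)$ avoids the entire set $\{|\lambda|>1\}$; therefore $|p'(z)/P'(z)|\le 1$, that is $|p'(z)|\le|P'(z)|$. Letting $|z|\to 1$ extends this to $B(\mathbb{D})$ by continuity, giving the claim on all of $\mathbb{C}\setminus\mathbb{D}$.

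For the equality case, suppose $|p'(z_0)|=|P'(z_0)|$ at some $z_0$ with $|z_0|>1$. The function $p'/P'$ is analytic in $\{|z|>1\}$ with modulus at most $1$ there, so by the maximum modulus principle it must reduce to a unimodular constant $e^{i\gamma}$; thus $p'=e^{i\gamma}P'$ and, after integrating, $p=e^{i\gamma}P+c$ for some constant $c$ (a degree comparison rules out $\deg p<\deg P$). It remains to show $c=0$. If $P$ has a zero on $B(\mathbb{D})$ this is immediate, since there $|p|\le|P|=0$ forces $p=0$. Otherwise all zeros of $P$ lie in $\mathbb{D}$; writing the constraint as $|P(z)+w|\le|P(z)|$ with $w=ce^{-i\gamma}$ on $B(\mathbb{D})$ and assuming $w\ne 0$ forces $\operatorname{Re}(\overline{w}\,P(z))<0$ for all $|z|=1$, so $P$ maps $B(\mathbb{D})$ into an open half-plane bounded by a line through the origin. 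By the argument principle this would make the winding number of $P$ about $0$, and hence the number of its zeros in $\mathbb{D}$, equal to zero, contradicting $\deg P=n\ge 1$. Hence $c=0$ and $p=e^{i\gamma}P$.

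I expect the main obstacle to be the careful handling of the two boundary phenomena: first, justifying the exterior domination bound when $P$ has zeros on $B(\mathbb{D})$ (handled by the removable-singularity observation), and second, eliminating the additive constant in the equality analysis, where the winding-number argument is the decisive step.
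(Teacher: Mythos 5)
This is a background result (Bernstein, 1930) that the paper only cites from the literature and never proves, so there is no in-paper argument to compare against; your proposal must stand on its own, and it does. The route you take --- exterior maximum modulus to get $|p|\le|P|$ for $|z|\ge 1$, then the observation that $p-\lambda P$ has all zeros in $\overline{\mathbb{D}}$ for every $|\lambda|>1$, followed by Gauss--Lucas and the avoidance argument for $p'/P'$ --- is the classical proof of this theorem (it is essentially the argument in Rahman--Schmeisser), and your treatment of the equality case via the half-plane/winding-number contradiction is also correct. The only step I would ask you to expand is the removability claim when $P$ has a zero $\zeta\in B(\mathbb{D})$ of multiplicity $m$: the hypothesis $|p|\le|P|$ on the circle gives $|p(z)|\lesssim|z-\zeta|^{m}$ along an arc through $\zeta$, and since $p$ is a polynomial this forces $p$ to vanish at $\zeta$ to order at least $m$ (not merely to vanish), which is what makes $p/P$ bounded near $\zeta$ and lets the maximum principle apply on the exterior domain. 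With that sentence added, the argument is complete; alternatively you could sidestep the issue entirely by first proving the result for $\rho p$ with $\rho<1$ against a $P$ whose zeros are perturbed into $\mathbb{D}$ and then letting $\rho\to 1$.
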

	For $z\in{\mathbb{C}\setminus{\mathbb{D}}}$, denoting by $\Omega_{|z|}$ the image of the disc $\left\{t \in \mathbb{C};|t|<|z|\right\}$ under the mapping $\phi(t)=\frac{t}{1+t}$, Smirnov \cite{SM9} as a generalization of Theorem \ref{Theorem 1.1} proved the following:
	\begin{theorem}\label{Theorem 1.2}
	Let $p$ and $P$ be polynomails possessing conditions as in Theorem \ref{Theorem 1.1}, then for $z\in{\mathbb{C}\setminus{\mathbb{D}}}$
	\begin{equation}\label{7}
		|\mathbb{S}_{\alpha}[p](z)|\le |\mathbb{S}_{\alpha}[P](z)|
	\end{equation}
	for all $\alpha \in \overline{\Omega}_{|z|}$, with $\mathbb{S}_{\alpha}[p](z):=zp^{\prime}(z)-n\alpha p(z)$, where $\alpha$ is a constant.
\end{theorem}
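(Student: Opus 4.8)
The plan is to adapt the classical proof of Theorem~\ref{Theorem 1.1} (Bernstein's comparison theorem), replacing the differentiation operator $P\mapsto P'$ by the modified Smirnov operator $\mathbb{S}_\alpha$. First I would upgrade the hypothesis $|p(z)|\le|P(z)|$ on $B(\mathbb{D})$ to the whole exterior: since $P$ has all its zeros in $\overline{\mathbb{D}}$, the quotient $p/P$ is analytic in $\mathbb{C}\setminus\overline{\mathbb{D}}$, bounded at $\infty$ because $\deg p\le\deg P=n$, and bounded by $1$ on $B(\mathbb{D})$; the maximum modulus principle then gives $|p(z)|\le|P(z)|$ for all $|z|\ge 1$. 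Fixing a point $z_0$ with $|z_0|\ge 1$ and a parameter $\alpha\in\overline{\Omega}_{|z_0|}$, the goal is $|\mathbb{S}_\alpha[p](z_0)|\le|\mathbb{S}_\alpha[P](z_0)|$, which I would establish by contradiction through an auxiliary polynomial.

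The heart of the argument is a non-vanishing lemma for $\mathbb{S}_\alpha$: if $Q$ has degree $n$ with all its zeros $z_1,\dots,z_n$ in $\overline{\mathbb{D}}$, then $\mathbb{S}_\alpha[Q](z)\ne 0$ for $|z|>1$ and $\alpha\in\Omega_{|z|}$. To prove it I would use the logarithmic-derivative identity
\[
\frac{\mathbb{S}_\alpha[Q](z)}{Q(z)}=\frac{zQ'(z)}{Q(z)}-n\alpha=\sum_{j=1}^{n}\left(\frac{z}{z-z_j}-\alpha\right),
\]
valid since $Q(z)\ne 0$ for $|z|>1$. The key geometric observation is that $\phi(t)=t/(1+t)$ carries $\{|t|\ge|z|\}\cup\{\infty\}$ onto the closed disc $\overline{D}$ bounded by the circle $\phi(\{|t|=|z|\})$, and that $z/(z-z_j)=\phi(-z/z_j)$ with $|-z/z_j|\ge|z|$ whenever $|z_j|\le 1$; hence every term $z/(z-z_j)$ lies in $\overline{D}$, while by definition $\Omega_{|z|}=\phi(\{|t|<|z|\})$ is precisely the open complement of $\overline{D}$. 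Since $\overline{D}$ is convex, the mean $\tfrac1n\sum_j z/(z-z_j)$ again lies in $\overline{D}$, so it cannot equal $\alpha\in\Omega_{|z|}$; therefore the displayed sum is nonzero and $\mathbb{S}_\alpha[Q](z)\ne 0$.

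With the lemma in hand I would complete the comparison, first for $\alpha\in\Omega_{|z_0|}$ and $|z_0|>1$. Here the lemma applied to $Q=P$ already gives $\mathbb{S}_\alpha[P](z_0)\ne 0$, so suppose toward a contradiction that $|\mathbb{S}_\alpha[p](z_0)|>|\mathbb{S}_\alpha[P](z_0)|$ and set $\lambda:=\mathbb{S}_\alpha[p](z_0)/\mathbb{S}_\alpha[P](z_0)$, $|\lambda|>1$. Consider $G:=\lambda P-p$; its leading coefficient $\lambda a_n-b_n$ is nonzero, since $|b_n|\le|a_n|$ (from $|p(z)/P(z)|\le 1$ as $|z|\to\infty$) and $|\lambda|>1$ force $|\lambda a_n|>|a_n|\ge|b_n|$, so $\deg G=n$. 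Moreover $G$ has all its zeros in $\overline{\mathbb{D}}$: a zero $w$ with $|w|>1$ would give $|\lambda||P(w)|=|p(w)|\le|P(w)|$ with $P(w)\ne 0$, forcing $|\lambda|\le 1$. Applying the lemma to $Q=G$ yields $\mathbb{S}_\alpha[G](z_0)\ne 0$, whereas linearity gives $\mathbb{S}_\alpha[G](z_0)=\lambda\,\mathbb{S}_\alpha[P](z_0)-\mathbb{S}_\alpha[p](z_0)=0$ by the choice of $\lambda$, a contradiction. This proves \eqref{7} for $\alpha\in\Omega_{|z_0|}$, $|z_0|>1$; the remaining cases $\alpha\in\overline{\Omega}_{|z_0|}\setminus\Omega_{|z_0|}$ and $|z_0|=1$ then follow by continuity of $\alpha\mapsto\mathbb{S}_\alpha[\cdot](z_0)$ and of both sides in $z_0$, approximating from the open region.

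I expect the main obstacle to be the non-vanishing lemma, and within it the precise identification of $\overline{\Omega}_{|z|}$ with the complement of the locus $\{z/(z-z_j):|z_j|\le 1\}$. The computation that $\phi(t)=t/(1+t)$ sends the circle $|t|=|z|$ to the circle of centre $|z|^2/(|z|^2-1)$ and radius $|z|/(|z|^2-1)$, and that the image of the disc $\{|t|<|z|\}$ is its exterior, must be carried out carefully, including the half-plane degeneration when $|z|=1$; it is exactly this correspondence that pins down the admissible range of $\alpha$. By comparison, the convexity step and the no-degree-drop verification for $G$ are routine once the geometry is in place.
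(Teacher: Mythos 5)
The paper does not prove Theorem~\ref{Theorem 1.2}: it is quoted in the introduction as a known result of Smirnov \cite{SM9} and used only as motivation for the modified operator, so there is no internal proof to compare yours against. Judged on its own merits, your argument is correct and is the standard Laguerre-type proof of such comparison theorems. The non-vanishing lemma is indeed the crux, and your geometry checks out: $\phi(t)=t/(1+t)$ carries $\{|t|\ge |z|\}\cup\{\infty\}$ onto the closed disc centred at $|z|^2/(|z|^2-1)$ of radius $|z|/(|z|^2-1)$ (degenerating to the closed half-plane $\mathrm{Re}\,w\ge 1/2$ when $|z|=1$); each term $z/(z-z_j)=\phi(-z/z_j)$ lands in that convex set because $|{-z/z_j}|\ge|z|$ when $|z_j|\le 1$ (the case $z_j=0$ giving $\phi(\infty)=1$, which also lies there); convexity keeps the average out of $\Omega_{|z|}$, whence $\mathbb{S}_{\alpha}[Q](z)\ne 0$. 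The normalization $G=\lambda P-p$ with $|\lambda|>1$, the verification that $\deg G=n$ and that $G$ has all zeros in $\overline{\mathbb{D}}$, and the two continuity passages (from $\Omega_{|z_0|}$ to $\overline{\Omega}_{|z_0|}$, and from $|z_0|>1$ to $|z_0|=1$, using that $\overline{\Omega}_{1}\subset\Omega_{r}$ for every $r>1$) are all in order. The one step you should write out explicitly is the maximum-modulus extension of $|p|\le|P|$ to $|z|\ge 1$ when $P$ has zeros on the unit circle: there $p/P$ has apparent boundary singularities, and one must observe that $|p|\le|P|$ on $B(\mathbb{D})$ forces $p$ to vanish at each such zero to at least the same multiplicity, so the singularities are removable and the boundary bound survives. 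That is routine, but it should not be left implicit.
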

For $\alpha \in \overline{\Omega}_{|z|}$ in inequality (\ref{7}), the equality holds at a point $z \in {\mathbb{C}\setminus{\mathbb{D}}}$ only if $p=e^{i\gamma}P,~\gamma \in \mathbb{R}$. We note that for fixed $z \in {\mathbb{C}\setminus{\mathbb{D}}}$, Inequality (\ref{7}) can be replaced by (see for reference $\cite{GS6,EGVV}$)
	\begin{eqnarray*}
		\left|zp^{\prime}(z)-n\frac{az}{1+az}p(z)\right|\le \left|zP^{\prime}(z)-n\frac{az}{1+az}P(z)\right|,
	\end{eqnarray*}
	where $a$ is arbitrary from $\overline{\mathbb{D}}$.
	Equivalently for $z\in{\mathbb{C}\setminus{\mathbb{D}}}$
	\begin{eqnarray*}
		|\tilde{\mathbb{S}}_{a}[p](z)|\le |\tilde{\mathbb{S}}_{a}[P](z)|,
	\end{eqnarray*}
	where $\tilde{\mathbb{S}}_{a}[p](z)= (1+az)p^{\prime}(z)-na p(z)$ is known as modified Smirnov operator.
	The modified Smirnov operator $\tilde{\mathbb{S}}_{a}$ is more preferred in a sense than Smirnov operator $\mathbb{S}_{\alpha}$, because the parameter $a$ of $\tilde{\mathbb{S}}_{a}$ does not depend on $z$ unlike parameter $\alpha$ of $\mathbb{S}_{\alpha}$.\\\\
		Marden \cite{MM} introduced a differential operator $\mathbb{B}: \mathbb{P}_{n} \to \mathbb{P}_{n}$ of $mth$ order. This operator
	carries a polynomial $p \in \mathbb{P}_{n}$ into
	\begin{eqnarray*}
		\mathbb{B}[p](z)=\lambda_{0} p(z)+ \lambda_{1} \frac{nz}{2}p^{\prime}(z)+...+\lambda_{m} \left(\frac{nz}{2}\right)^m p^{m}(z),
	\end{eqnarray*}
	where $\lambda_{0}, \lambda_{1},...,\lambda_{m}$ are constants such that
	\begin{eqnarray}\label{8}
		u(z)=\lambda_{0} +\binom{n}{1} \lambda_{1}z +...+ \binom{n}{m} \lambda_{m} z^m\ne0,\quad for \quad Re(z)>\frac{n}{4}.
	\end{eqnarray}
	Rahman and Schmeisser \cite{QR} considered the Marden operator for $m=2$ and showed
	that this operator preserves the inequalities between polynomials and accordingly proved the following:
	\begin{theorem} \label{Theorem 1.3}
		Let $p$ and $P$ be polynomials possessing conditions as in Theorem \ref{Theorem 1.1}, Then
		\begin{eqnarray}\label{9}
			|\mathbb{B}[p](z)| \le |\mathbb{B}[P](z)|\quad for \quad z \in \mathbb{C}\setminus{\mathbb{D}},
		\end{eqnarray}
		where the constants $\lambda_{0}, \lambda_{1},...,\lambda_{m}$ possess condition (\ref{8}). For  $z\in{\mathbb{C}\setminus{\mathbb{D}}}$ in (\ref{9}), the equality holds if and only if $p(z) = \gamma z^n,~\gamma \ne0$.
	\end{theorem}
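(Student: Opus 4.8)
The plan is to derive (\ref{9}) from a zero-localization property of the operator $\mathbb{B}$, mirroring the way a bound on the circle is propagated to the exterior of $\mathbb{D}$ by the maximum-modulus principle. Assume without loss of generality that $\deg P=n$. Since $P$ has all its zeros in $\overline{\mathbb{D}}$, the quotient $g:=p/P$ is analytic in $\{|z|>1\}$; the hypothesis $|p|\le|P|$ on $B(\mathbb{D})$, together with the fact that any boundary zero of $P$ must be a zero of $p$ of at least the same order, shows that $|g|\le 1$ on $\{|z|\ge 1\}\cup\{\infty\}$. Fix $z_0$ with $|z_0|>1$ (the case $|z_0|=1$ will follow at the end by continuity) and suppose, for contradiction, that $|\mathbb{B}[p](z_0)|>|\mathbb{B}[P](z_0)|$. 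By the crux lemma below applied to $P$ we will have $\mathbb{B}[P](z_0)\ne 0$, so there is a $\lambda$ with $|\lambda|>1$ and $\mathbb{B}[p](z_0)-\lambda\,\mathbb{B}[P](z_0)=0$; by linearity this reads $\mathbb{B}[Q](z_0)=0$ with $Q:=p-\lambda P$. Because $|g|\le 1<|\lambda|$ on $\{|z|\ge1\}$, the equation $g=\lambda$ has no solution there, so $Q=P\,(g-\lambda)$ has no zero in $\{|z|>1\}$; as $\deg Q=n$ (its leading coefficient $a_p-\lambda a_P\neq 0$, since $|a_p|\le|a_P|$), all $n$ zeros of $Q$ lie in $\overline{\mathbb{D}}$.

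The contradiction, and the whole weight of the theorem, rests on the following lemma, which is where (\ref{8}) enters: if $Q\in\mathbb{P}_n$ has all its zeros in $\overline{\mathbb{D}}$, then so does $\mathbb{B}[Q]$. Granting it, $\mathbb{B}[Q]$ cannot vanish at a point $z_0$ with $|z_0|>1$, contradicting $\mathbb{B}[Q](z_0)=0$; this proves (\ref{9}) for $|z|>1$, and the boundary $|z|=1$ then follows by letting $|z|\downarrow 1$. To prove the lemma I would invoke the Grace--Walsh--Szeg\H{o} coincidence theorem. For fixed $z_0$, writing $Q(z)=a_n\prod_{j=1}^{n}(z-z_j)$, each derivative $Q^{(k)}(z_0)$ is a symmetric function of $z_1,\dots,z_n$ that is affine-linear in every $z_j$, hence so is $\mathbb{B}[Q](z_0)=\sum_{k=0}^m\lambda_k(nz_0/2)^k Q^{(k)}(z_0)$. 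The coincidence theorem therefore reduces the question of whether this form can vanish for zeros in the circular region $\overline{\mathbb{D}}$ to the single diagonal configuration $Q(z)=(z-\zeta)^n$, $\zeta\in\overline{\mathbb{D}}$, on which one computes $\mathbb{B}[(z-\zeta)^n]=(z-\zeta)^{n-m}\sum_{k=0}^m\lambda_k(nz/2)^k\frac{n!}{(n-k)!}(z-\zeta)^{m-k}$. The requirement that the degree-$m$ factor be free of zeros in $\{|z|>1\}$ for every $\zeta\in\overline{\mathbb{D}}$ is exactly condition (\ref{8}); indeed, the scaling $nz/2$ and the threshold $\mathrm{Re}(z)>n/4$ are the images, under $w=2z/n$ and the Smirnov map $t\mapsto t/(1+t)$, of the half-plane $\Omega_{1}=\{\mathrm{Re}\,w<1/2\}$ attached to $|z|=1$, which is precisely why (\ref{8}) is the admissibility condition for $\mathbb{B}$.

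The main obstacle is this lemma: casting $\mathbb{B}[Q](z_0)$ in the multiaffine, circular-region-respecting form needed for the coincidence theorem and then carrying out the extremal computation so that the zero-free region collapses exactly onto (\ref{8}). The case $m=2$ worked out by Rahman and Schmeisser is the model; the general $m$ follows by the same composition/coincidence machinery, or alternatively by factoring $\mathbb{B}$ into iterated polar derivatives and applying Laguerre's theorem repeatedly. Finally, the equality statement would follow by tracking when both inequalities become equalities: the strictness encoded in (\ref{8}), namely that the zeros of $u$ are confined to $\mathrm{Re}(z)\le n/4$, permits equality only at the most degenerate extremal configuration of the coincidence theorem, which forces $p(z)=\gamma z^n$ with $\gamma\neq 0$.
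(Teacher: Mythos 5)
The paper never proves Theorem \ref{Theorem 1.3}; it is quoted from Rahman and Schmeisser \cite{QR}, so there is no in-paper argument to measure yours against. Your reconstruction follows the standard route and its skeleton is right: the $\lambda$-trick reduces the comparison inequality to the zero-preservation lemma (that $\mathbb{B}$ maps polynomials with all zeros in $\overline{\mathbb{D}}$ to polynomials with all zeros in $\overline{\mathbb{D}}$), and you correctly secure $\deg(p-\lambda P)=n$ via $|a_p|\le|a_P|$ from the maximum principle at infinity. You should also say explicitly why $\mathbb{B}[P]\not\equiv 0$ (so that ``zeros in $\overline{\mathbb{D}}$'' really gives $\mathbb{B}[P](z_0)\ne 0$): the leading coefficient of $\mathbb{B}[P]$ is a nonzero multiple of $u(n/2)$, and $n/2$ lies in the half-plane where (\ref{8}) forbids zeros of $u$.

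Two points need repair. First, your diagonal computation does not actually land on condition (\ref{8}). With the operator as the paper writes it (using $p^{(k)}$ rather than $p^{(k)}/k!$), the degree-$m$ factor you display equals $\sum_{k}\binom{n}{k}\,k!\,\lambda_k\left(\tfrac{nz}{2}\right)^k(z-\zeta)^{m-k}$, and its zero-freeness for $|z|>1$, $\zeta\in\overline{\mathbb{D}}$ is equivalent to the nonvanishing of $\sum_k\binom{n}{k}k!\lambda_k w^k$ on $\mathrm{Re}(w)>n/4$ --- which is not (\ref{8}) once $m\ge 2$. The clean identity you need, namely $\mathbb{B}[(z-\zeta)^n](z)=(z-\zeta)^{n}\,u\bigl(\tfrac{nz}{2(z-\zeta)}\bigr)$ combined with the elementary equivalence $|2w|\le|2w-n|\iff \mathrm{Re}(w)\le n/4$, holds only for the normalized operator with $p^{(k)}/k!$; this is a slip in the paper's definition that you have inherited, and you must either fix the normalization or restate (\ref{8}) before the hypothesis enters your argument correctly. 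Second, the equality discussion is only a gesture: nothing in your sketch isolates the extremal configurations, and the ``only if $p=\gamma z^n$'' claim as stated is itself doubtful (for instance $p=e^{i\gamma}P$ always yields equality), so that part cannot be accepted as proved.
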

	A variety of key papers concerning the $\mathbb{B}$-operator have appeared in the literature \cite{IQ, WL}.
	
	In order to compare the Smirnov operator $\mathbb{S}_{\alpha}[p](z):=zp^{\prime}(z)-n\alpha p(z)$ and the
	Rahman’s operator (with $\lambda_{2} = 0$) $\mathbb{B}[p](z)=\lambda_{0} p(z)+ \lambda_{1} \frac{nz}{2}p^{\prime}(z)$, we require $\alpha \in \overline{\Omega}_{|z|}$ in
	inequality (\ref{7}) and in inequality (\ref{9}) the root of the polynomial $u(z) = \lambda_{0} + n \lambda_{1} z$ should lie in the half-plane $Re(z) \le \frac{n}{4}$, that is
	\begin{eqnarray*}
		Re\left(-\frac{\lambda_{0}}{\lambda_{1}n}\right) \le \frac{n}{4}.
	\end{eqnarray*}
	Compare the sets of parameters in Theorem \ref{Theorem 1.2} and Theorem \ref{Theorem 1.3}, we see that in Theorem \ref{Theorem 1.2}, this set(coefficient near $- p(z))$ is $\mathcal{A}= \left\{ n \alpha : \alpha \in \Omega_{|z|}\right\}$ and in Theorem \ref{Theorem 1.3}, the set of such coefficient near $ - p(z)$ is
	
	$$\mathcal{B}=\left\{-\frac{2 \lambda_{0}}{\lambda_{1} n}:Re\left( -\frac{ \lambda_{0}}{\lambda_{1} n}\right) \le \frac{n}{4} \right\}=\left\{ t : Re(t)\le \frac{n}{2}\right\}.$$
	Consider the differential inequalities from Theorem \ref{Theorem 1.2} and Theorem \ref{Theorem 1.3} for $z \in B(\mathbb{D})$, we have $\mathcal{A}=\mathcal{B}$. But for $z\in{\mathbb{C}\setminus{\mathbb{D}}}$ we have $\mathcal{B}\subset\mathcal{A}$. In other words in Theorem \ref{Theorem 1.2}
	and Theorem \ref{Theorem 1.3} formally the same inequality was obtained but for different set of
	parameters. Moreover, the set of parameters in Theorem \ref{Theorem 1.2} is essentially wider than that of Theorem \ref{Theorem 1.3}.
	Consequently,
	\begin{eqnarray}
		\mathbb{B}[p](z) = \lambda_{1} \frac{n}{2}{S}_{\alpha}[p](z).
	\end{eqnarray}
	These facts were first observed by Ganenkova and Starkov \cite{GS6}.\\
	In this paper, we prove some more general results concerning the modified Smirnov operator preserving inequalities between polynomials, which in turn yields compact generalizations of some well-known polynomial inequalities.
	\section{Auxiliary Results}
	Before writing our main results, we prove the following lemmas which are required for their proofs.
	\begin{lemma}\label{lemma 1}
		Let $P\in \mathbb{P}_{n}$, and has all zeros in $\overline{\mathbb{D}}$. Let $a \in B(\mathbb{D})$ be not the exceptional value for $P$, then all the zeros of $\tilde{\mathbb{S}}_{a}[P]$ lie in $\overline{\mathbb{D}}$.
	\end{lemma}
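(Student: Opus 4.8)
The plan is to recast the modified Smirnov operator as a polar derivative and then exploit the geometry of M\"{o}bius images of the unit disc. First I would record the algebraic identity
\[
\tilde{\mathbb{S}}_a[P](z) = (1+az)P'(z) - naP(z) = -a\big[\,nP(z) + (\zeta - z)P'(z)\,\big], \qquad \zeta = -\tfrac{1}{a},
\]
so that the zeros of $\tilde{\mathbb{S}}_a[P]$ coincide with those of the polar derivative $D_\zeta P(z) := nP(z) + (\zeta - z)P'(z)$. Since $a \in B(\mathbb{D})$ we have $|\zeta| = 1/|a| = 1$, so the pole $\zeta$ sits \emph{on} the unit circle; this boundary placement is exactly what makes the argument delicate.

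Next I would argue by contradiction. Suppose $w$ is a zero of $\tilde{\mathbb{S}}_a[P]$ with $|w| > 1$. Because every zero of $P$ lies in $\overline{\mathbb{D}}$, we have $P(w) \neq 0$, and since $|w| > 1 = |\zeta|$ we also have $w \neq \zeta$, hence $1 + aw = a(w - \zeta) \neq 0$. Writing $P(z) = a_n \prod_{k=1}^{n}(z - z_k)$ with $|z_k| \le 1$ and using $P'(w)/P(w) = \sum_k (w - z_k)^{-1}$, the relation $D_\zeta P(w) = 0$ rearranges to
\[
\frac{1}{n}\sum_{k=1}^{n}\frac{1}{\,w - z_k\,} = \frac{1}{\,w - \zeta\,},
\]
so that $\tfrac{1}{w-\zeta}$ is the equal-weight average of the numbers $\tfrac{1}{w-z_k}$.

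The heart of the proof is then a convexity observation. Consider the M\"{o}bius map $\mu(z) = (w - z)^{-1}$. As $|w| > 1$, its pole at $z=w$ lies outside $\overline{\mathbb{D}}$, so $\mu$ carries the closed disc $\overline{\mathbb{D}}$ bijectively onto a closed disc $\overline{\Delta}$ and the circle $B(\mathbb{D})$ onto the boundary $\partial\Delta$. The displayed identity says $\mu(\zeta)$ is a convex combination of the points $\mu(z_k) \in \overline{\Delta}$; but $|\zeta| = 1$ forces $\mu(\zeta) \in \partial\Delta$, an extreme point of the strictly convex set $\overline{\Delta}$. An extreme point can equal an average of points of $\overline{\Delta}$ only if all of them coincide with it, so $\mu(z_k) = \mu(\zeta)$, i.e. $z_k = \zeta$ for every $k$. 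Then $P(z) = a_n\,(z+1/a)^n$, which is precisely the excluded configuration in which $a$ is the exceptional value for $P$ (equivalently $\tilde{\mathbb{S}}_a[P] \equiv 0$). This contradiction shows no zero of $\tilde{\mathbb{S}}_a[P]$ can have $|w| > 1$, which is the claim.

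I expect the main obstacle to be the boundary position of $\zeta$: since $\zeta$ lies on $B(\mathbb{D})$ rather than strictly outside the region containing the zeros of $P$, the classical Laguerre theorem does not apply in its clean form and yields at best a closed-disc conclusion. The extreme-point argument is what sharpens this to a contradiction, and it is exactly here that the non-exceptional hypothesis on $a$ is indispensable, as it rules out the degenerate coincidence $z_1 = \cdots = z_n = \zeta$.
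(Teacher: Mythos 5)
The paper does not actually prove this lemma --- it is stated and attributed to Ganenkova and Starkov \cite{GS6} --- so there is no in-paper argument to compare against; judged on its own, your proof is correct and self-contained. The reduction $\tilde{\mathbb{S}}_a[P]=-a\,D_\zeta P$ with $\zeta=-1/a$ is right (and legitimate since $a\in B(\mathbb{D})$ forces $a\neq 0$), the logarithmic-derivative identity $\frac{1}{n}\sum_k(w-z_k)^{-1}=(w-\zeta)^{-1}$ is the standard Laguerre setup, and your extreme-point argument is exactly the right way to handle the delicate feature that $|\zeta|=1$ places the pole on the boundary of the disc containing the zeros, where the clean form of Laguerre's theorem is unavailable; the degenerate outcome $z_1=\dots=z_n=\zeta$, i.e. $P(z)=c(1+az)^n$, is precisely the case $\tilde{\mathbb{S}}_a[P]\equiv 0$, which is what the non-exceptionality hypothesis excludes (you should still verify that this matches the definition of ``exceptional value'' used in \cite{GS6}, but it is the only definition under which the lemma can hold). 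Two small caveats. First, your factorization $P(z)=a_n\prod_{k=1}^n(z-z_k)$ silently assumes $\deg P=n$ exactly; the lemma as stated only says $P\in\mathbb{P}_n$ (degree at most $n$), and for lower degree the statement is false --- e.g. $P(z)=z-1$, $n=2$, $a=1$ gives $\tilde{\mathbb{S}}_1[P](z)=3-z$ with a zero at $z=3$ --- so the exact-degree hypothesis is genuinely needed and worth stating. Second, it would be worth one sentence observing that if $\tilde{\mathbb{S}}_a[P]\equiv 0$ the claim about ``zeros'' is vacuous or false by fiat, which is another way of seeing why the exceptional case must be excluded before one may speak of ``a zero $w$ with $|w|>1$.'' Neither point damages the argument; the proof stands.
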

The above lemma is due to Genenkova and starkov $\cite{GS6}$.
	Also, the next lemma is due to Aziz \cite{AA}.
	\begin{lemma} \label{lemma 2}
		If \( p(z) \) is a polynomial of degree \( n \) having all its zeros in \( |z| \leq k \), where \( k \geq 0 \), then for every \( R \geq r \) and \( rR \geq k^2 \),
		\[
		|p(Rz)| \geq \left( \frac{R + k}{r + k} \right)^n |p(rz)| \quad \text{for} \quad z \in B(\mathbb{D}).\]
	\end{lemma}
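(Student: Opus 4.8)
\section*{Proof proposal for Lemma \ref{lemma 2}}

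The plan is to reduce the inequality to a product of single-factor estimates. Writing $p(z) = c\prod_{j=1}^{n}(z - z_j)$ with $|z_j| \le k$ for every $j$, we have $|p(Rz)| = |c|\prod_{j}|Rz - z_j|$ and $|p(rz)| = |c|\prod_{j}|rz - z_j|$, so it suffices to establish the pointwise factor inequality
$$(r+k)\,|Rz - z_j| \ge (R+k)\,|rz - z_j| \qquad (z \in B(\mathbb{D}))$$
for each zero $z_j$; multiplying these $n$ inequalities and cancelling $|c|$ then yields the claim. Working with the factored form sidesteps any difficulty with vanishing denominators when $r \le k$.

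To prove the single-factor inequality, fix a zero $w := z_j$ and a point $z$ with $|z| = 1$, and square both sides. Setting $s := \operatorname{Re}(\bar z w)$ and $t := |w|^2$, one has $|Rz-w|^2 = R^2 - 2Rs + t$ and $|rz-w|^2 = r^2 - 2rs + t$, together with the constraints $0 \le t \le k^2$ and $-\sqrt{t} \le s \le \sqrt{t}$. The quantity to be shown nonnegative is
$$F(s,t) := (r+k)^2\bigl(R^2 - 2Rs + t\bigr) - (R+k)^2\bigl(r^2 - 2rs + t\bigr).$$
The key computation is that $F$ is affine in $s$ with slope $2(R-r)(rR - k^2)$, which is nonnegative precisely because of the two hypotheses $R \ge r$ and $rR \ge k^2$. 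Hence $F$ is nondecreasing in $s$, and it is enough to verify $F \ge 0$ at the smallest admissible value $s = -\sqrt{t}$.

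Finally, writing $u := \sqrt{t} \in [0,k]$, the substitution $s = -u$ collapses the two quadratics into perfect squares, $R^2 + 2Ru + u^2 = (R+u)^2$ and $r^2 + 2ru + u^2 = (r+u)^2$, so that
$$F(-u,\,u^2) = \bigl[(r+k)(R+u)\bigr]^2 - \bigl[(R+k)(r+u)\bigr]^2.$$
Factoring this difference of squares, the nontrivial factor is $(r+k)(R+u) - (R+k)(r+u) = (R-r)(k-u)$, which is nonnegative since $R \ge r$ and $u \le k$, while the complementary factor $(r+k)(R+u) + (R+k)(r+u)$ is a sum of positive terms. This gives $F \ge 0$ and completes the factor inequality, whence the lemma. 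I expect the only real obstacle to be the sign analysis in the middle step: recognizing that the $s$-coefficient is exactly $2(R-r)(rR-k^2)$, which is what forces both hypotheses $R \ge r$ and $rR \ge k^2$ to enter the argument; everything else is routine algebra.
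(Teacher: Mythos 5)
Your proof is correct. Note that the paper itself gives no proof of this lemma; it is quoted from Aziz (reference \cite{AA}), and your argument is essentially the standard one from that source: factor $p$ over its zeros, reduce to the single-factor inequality $(r+k)\,|Rz-z_j|\ge (R+k)\,|rz-z_j|$ on $|z|=1$, and verify it by an elementary estimate in which the hypotheses enter exactly where you locate them --- the coefficient of $s=\operatorname{Re}(\bar z z_j)$ is $2(R-r)(rR-k^2)$, forcing both $R\ge r$ and $rR\ge k^2$, and the boundary case $s=-\sqrt{t}$ collapses to the factorization $(R-r)(k-u)\ge 0$. All the algebra checks out (the slope computation, the perfect-square collapse at $s=-u$, and the difference-of-squares factorization), so this is a complete and self-contained proof of the lemma.
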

	\begin{lemma}\label{lemma 3}
		If $p\in \mathbb{P}_{n}$ with $|p(z)|<\mathbb{M}$ for $z \in B(\mathbb{D})$, Then 
		\begin{equation*}
			|\tilde{\mathbb{S}}_{a}[p](z)|\le \mathbb{M}|\tilde{\mathbb{S}}_{a}[z^n]| ~~~for~~~ z\in{\mathbb{C}\setminus{\mathbb{D}}}.
		\end{equation*}
	\end{lemma}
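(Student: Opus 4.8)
The plan is to deduce this directly from the operator-preserving comparison inequality for the modified Smirnov operator (the reformulation of Theorem \ref{Theorem 1.2} stated immediately after it), by comparing an arbitrary admissible $p$ with a single extremal comparison polynomial. The natural candidate is the monomial $P(z)=\mathbb{M}z^{n}$, which is the equality case flagged for the Bernstein/Smirnov setting, so the whole argument reduces to checking that $(p,P)$ falls under the hypotheses of Theorem \ref{Theorem 1.2} and then reading off the conclusion.

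First I would set $P(z)=\mathbb{M}z^{n}$. All zeros of $P$ (all located at the origin) lie in $\overline{\mathbb{D}}$, and on $B(\mathbb{D})$ we have $|P(z)|=\mathbb{M}|z|^{n}=\mathbb{M}$; combined with the hypothesis $|p(z)|<\mathbb{M}$ this gives $|p(z)|\le|P(z)|$ on $B(\mathbb{D})$. Since $\deg p\le n=\deg P$, the pair $(p,P)$ meets every requirement of Theorem \ref{Theorem 1.2}. Applying the modified-Smirnov form of the comparison inequality to this pair, for every $a\in\overline{\mathbb{D}}$ and every $z\in\mathbb{C}\setminus\mathbb{D}$ we obtain $|\tilde{\mathbb{S}}_{a}[p](z)|\le|\tilde{\mathbb{S}}_{a}[P](z)|$. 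By linearity of $\tilde{\mathbb{S}}_{a}$ one has $\tilde{\mathbb{S}}_{a}[P](z)=\mathbb{M}\,\tilde{\mathbb{S}}_{a}[z^{n}](z)$, which is precisely the claimed bound. A short computation moreover gives $\tilde{\mathbb{S}}_{a}[z^{n}](z)=(1+az)nz^{n-1}-na z^{n}=nz^{n-1}$, so the right-hand side equals $\mathbb{M}\,n|z|^{n-1}$, though this explicit evaluation is not needed for the statement itself.

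The only points requiring care are bookkeeping: confirming that $P=\mathbb{M}z^{n}$ genuinely satisfies the zero-location and degree conditions of Theorem \ref{Theorem 1.2}, and noting that the strict boundary bound $|p|<\mathbb{M}$ comfortably supplies the non-strict hypothesis $|p|\le|P|$ used to invoke the comparison inequality. I do not anticipate a real obstacle here, since all the analytic content is already carried by the established operator-preserving inequality; the role of this lemma is simply to record its extremal consequence obtained by specializing the comparison polynomial to $\mathbb{M}z^{n}$, which will later serve as the quantitative majorant in the main results.
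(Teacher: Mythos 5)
Your argument is correct, but it takes a genuinely different route from the paper. You specialize the modified-Smirnov comparison inequality (the reformulation of Theorem \ref{Theorem 1.2}, $|\tilde{\mathbb{S}}_{a}[p](z)|\le|\tilde{\mathbb{S}}_{a}[P](z)|$ for $z\in\mathbb{C}\setminus\mathbb{D}$ and $a\in\overline{\mathbb{D}}$) to the single majorant $P(z)=\mathbb{M}z^{n}$, whose zeros all lie at the origin and whose modulus on $B(\mathbb{D})$ is exactly $\mathbb{M}$; linearity then gives $\tilde{\mathbb{S}}_{a}[P]=\mathbb{M}\,\tilde{\mathbb{S}}_{a}[E_n]$ and the lemma follows in one line. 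The paper instead works at the level of zero location: for $|\lambda|>1$ it applies Rouch\'e's theorem to place all zeros of $p(z)-\lambda\mathbb{M}z^{n}$ in $\overline{\mathbb{D}}$, invokes Lemma \ref{lemma 1} to conclude that $\tilde{\mathbb{S}}_{a}[p](z)-\lambda\mathbb{M}\tilde{\mathbb{S}}_{a}[z^{n}]$ also has all its zeros in $\overline{\mathbb{D}}$, and then derives the modulus inequality by choosing $\lambda$ to force a zero outside $\overline{\mathbb{D}}$ if the bound failed. The two arguments are close cousins --- the comparison theorem you cite is itself usually established by exactly this Rouch\'e-plus-zero-preservation mechanism --- but yours outsources all the work to the already-quoted Theorem \ref{Theorem 1.2}, whereas the paper's version is self-contained modulo the more elementary Lemma \ref{lemma 1} and matches the template it reuses in the proof of Theorem \ref{Theorem 3.1}. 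One incidental advantage of your route: the reformulated comparison inequality is stated for arbitrary $a\in\overline{\mathbb{D}}$, so you sidestep the ``exceptional value'' caveat that technically attaches to Lemma \ref{lemma 1}. Your explicit evaluation $\tilde{\mathbb{S}}_{a}[z^{n}]=nz^{n-1}$ is also correct, though, as you note, not needed.
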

	\begin{proof}
		Since  $|p(z)|<\mathbb{M}$ for $z \in B(\mathbb{D})$. If $\lambda$ is a complex number with $|\lambda|>1$. Then 
		\begin{eqnarray*}
			|p(z)|<|\lambda\mathbb{M}z^n|~~~ for~~~ z \in B(\mathbb{D}).
		\end{eqnarray*}
		Since $\lambda\mathbb{M}z^n$ has all zeros in  $\overline{\mathbb{D}}$, therefore by Rouche's theorem all zeros of $p(z)-\lambda\mathbb{M}z^n$ lie in $\overline{\mathbb{D}}$. Hence by Lemma \ref{lemma 1}, all zeros of $\tilde{\mathbb{S}}_{a}[p(z)-\lambda{\mathbb{M}}{z^n}]$ lie in $\overline{\mathbb{D}}$. Since $\tilde{\mathbb{S}}_{a}$ is linear, it follows that ${\tilde{\mathbb{S}}_{a}[p](z)}-{\tilde{\mathbb{S}}_{a}[\lambda\mathbb{M}z^n]}$ has all zeros in $\overline{\mathbb{D}}$.\\
		This gives
		\begin{equation} \label{11}
			|\tilde{\mathbb{S}}_{a}[p](z)|\le \mathbb{M}|\tilde{\mathbb{S}}_{a}[z^n]| ~~~for~~~ z\in{\mathbb{C}\setminus{\mathbb{D}}}.
		\end{equation}
		Because if this is not true, then there exist some $z_0 \in {\mathbb{C}\setminus{\mathbb{D}}}$ such that 
		\begin{equation*}
			|\tilde{\mathbb{S}}_{a}[p]({z_0})|> \mathbb{M}|\tilde{\mathbb{S}}_{a}[{z_0}^n]|.
		\end{equation*}
		Choosing $\lambda=\frac{\tilde{\mathbb{S}}_{a}[p]({z_0})}{\mathbb{M}|\tilde{\mathbb{S}}_{a}[{z_0}^n]},$ so that $|\lambda|>1$. With this choice of $\lambda$, we get a contradiction and hence inequality (\ref{11}) is true.
	\end{proof}
	The next two Lemmas are given by Shah and Fatima \cite{WMS}.
	\begin{lemma} \label{lemma 4}
		If  $p\in \mathbb{P}_{n}$, then for $ z \in \mathbb{C} \setminus \mathbb{D}$
		\begin{eqnarray}
			|\tilde{S}_{a}[p](z)| + |\tilde{S}_{a}[g](z)| \le \left\{| \tilde{S}_{a}[E_n](z)|+n|a|\right\} \max_{z \in B(\mathbb{D})}|p(z)|,
		\end{eqnarray}
		where $g(z)=z^n \overline{p(\frac{1}{\bar{z}})}$.
	\end{lemma}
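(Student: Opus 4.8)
The plan is to exploit the conjugate--reciprocal symmetry so as to collapse the two summands into a single \emph{self-inversive} polynomial, and then to extend a boundary estimate to $\mathbb{C}\setminus\mathbb{D}$ by a sharp growth inequality. Throughout, $a\in B(\mathbb{D})$, so $|a|=1$ (equivalently $\bar a=1/a$), and I write $M=\max_{z\in B(\mathbb{D})}|p(z)|$ and $E_n(z)=z^n$, so that $\tilde{S}_a[E_n](z)=nz^{n-1}$ and the asserted bound reads $|\tilde{S}_a[p](z)|+|\tilde{S}_a[g](z)|\le M\{n|z|^{n-1}+n|a|\}$. Since $1/\bar z=z$ on $B(\mathbb{D})$, the reciprocal polynomial $g(z)=z^n\overline{p(1/\bar z)}$ satisfies $|g(z)|=|p(z)|$ there, so $\max_{B(\mathbb{D})}|g|=M$; note also that the reciprocal of $g$ is again $p$.

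First I would reduce the sum to one modulus. Fix $z_0\in\mathbb{C}\setminus\mathbb{D}$ and choose $\beta$ with $|\beta|=1$ so that $|\tilde{S}_a[p](z_0)|+|\tilde{S}_a[g](z_0)|=|\tilde{S}_a[p](z_0)+\beta\,\tilde{S}_a[g](z_0)|$; by linearity this equals $|\tilde{S}_a[F](z_0)|$, where $F=p+\beta g\in\mathbb{P}_n$ and $\Psi:=\tilde{S}_a[F]=\tilde{S}_a[p]+\beta\,\tilde{S}_a[g]\in\mathbb{P}_{n-1}$ (the top coefficients cancel). The point of introducing $F$ is that it is self-inversive: since the reciprocal of $p$ is $g$ and that of $g$ is $p$, one gets $z^n\overline{F(1/\bar z)}=\bar\beta F(z)$.

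The heart of the argument is to show that $\Psi$ inherits self-inversiveness. Here I would establish, by a direct coefficient comparison that uses $a\bar a=1$, the operator--reversal identity
\[
z^{n-1}\,\overline{\tilde{S}_a[p](1/\bar z)}=-\bar a\,\tilde{S}_a[g](z),
\]
valid for all $z$, together with its companion obtained by interchanging $p$ and $g$. Applying both to $F=p+\beta g$ and using $\beta\bar\beta=1$ yields $z^{n-1}\overline{\Psi(1/\bar z)}=-\bar a\bar\beta\,\Psi(z)$, i.e.\ $\Psi$ is self-inversive of degree $n-1$. Independently, Lemma~\ref{lemma 3} (applied with $\mathbb{M}\to M^{+}$, to $p$ and to $g$, each of boundary maximum $M$) gives $|\tilde{S}_a[p](z)|,\,|\tilde{S}_a[g](z)|\le Mn|z|^{n-1}$ for $z\in\mathbb{C}\setminus\mathbb{D}$, hence $\max_{z\in B(\mathbb{D})}|\Psi(z)|\le 2Mn$.

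Finally I would pass from $B(\mathbb{D})$ to $|z_0|\ge 1$ using the growth estimate for self-inversive polynomials (the Ankeny--Rivlin inequality, which holds for such polynomials because they obey the Erd\"os--Lax--type bound $\max|\Psi'|\le\frac{n-1}{2}\max|\Psi|$): for $R=|z_0|\ge 1$,
\[
|\Psi(z_0)|\le\frac{R^{\,n-1}+1}{2}\,\max_{z\in B(\mathbb{D})}|\Psi(z)|\le\frac{R^{\,n-1}+1}{2}\cdot 2Mn=Mn\bigl(R^{\,n-1}+1\bigr).
\]
Since $|a|=1$ this is exactly $M\{|\tilde{S}_a[E_n](z_0)|+n|a|\}$, proving the claim; equality for $p(z)=z^n$ (whence $g\equiv 1$) shows the bound is sharp. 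The main obstacle is precisely this last growth step: the crude estimates --- applying Lemma~\ref{lemma 3} to $F$ directly, or invoking $|\Psi(Rz)|\le R^{\,n-1}\max|\Psi|$ --- overshoot, carrying a spurious factor that degrades to the wrong constant once $|z_0|>1$; only the sharp self-inversive growth factor $\tfrac12(R^{\,n-1}+1)$ correctly balances the $|z_0|^{n-1}$ and the $n|a|$ contributions. Securing that inequality (and treating the degree-drop cases where $F$ has degree $<n$) is where the real work lies.
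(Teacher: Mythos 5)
First, a point of comparison: the paper does not actually prove this lemma --- it is imported from Shah and Fatima \cite{WMS} --- so the only in-paper analogue of an argument is the proof of Theorem \ref{Theorem 3.2}, which runs on the standard device: perturb $p$ by a constant $\delta M$ with $|\delta|>1$ so that $p(z)-\delta M$ is zero-free in $\overline{\mathbb{D}}$ while its conjugate-reciprocal $g(z)-\bar\delta M z^{n}$ has all zeros there and equal modulus on $B(\mathbb{D})$, apply the Smirnov-type comparison, choose the argument of $\delta$ to split the right-hand side, and let $|\delta|\to 1$. Your route is genuinely different: you collapse the two moduli into a single self-inversive polynomial $\Psi=\tilde{S}_a[p]+\beta\tilde{S}_a[g]$ of formal degree $n-1$, bound it on $B(\mathbb{D})$ via Lemma \ref{lemma 3}, and grow it out to $|z_0|=R$ with the sharp factor $\tfrac{1}{2}(R^{n-1}+1)$ valid for self-inversive polynomials. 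For $|a|=1$ this does land on exactly the right constant: your reversal identity $z^{n-1}\overline{\tilde{S}_a[p](1/\bar z)}=-\bar a\,\tilde{S}_a[g](z)$ checks out when $a\bar a=1$, $\Psi$ is then self-inversive with respect to degree $n-1$, and $Mn(R^{n-1}+1)=M\{|\tilde S_a[E_n](z_0)|+n|a|\}$.

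The genuine gap is your blanket assumption $a\in B(\mathbb{D})$, i.e.\ $|a|=1$. The lemma is intended for the operator with $a$ ranging over $\overline{\mathbb{D}}$ (that is why the bound carries $n|a|$ rather than $n$, and Lemma \ref{Lemma 5} explicitly admits $a\in\overline{\mathbb{D}}$), and your construction collapses off the unit circle: for $|a|<1$ the coefficient computation gives $z^{n-1}\overline{\tilde{S}_a[p](1/\bar z)}=ng(z)-(z+\bar a)g'(z)=-\bar a\,\tilde{S}_{1/\bar a}[g](z)$, an operator with parameter $1/\bar a\notin\overline{\mathbb{D}}$, so $\Psi$ is no longer self-inversive and the entire reduction fails. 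A secondary caveat: the decisive growth step rests on the Erd\"{o}s--Lax-type derivative bound for self-inversive polynomials (O'Hara--Rodriguez) fed into the Ankeny--Rivlin integration; that is a true classical theorem, but it is nowhere among the paper's tools, and you yourself flag securing it as ``where the real work lies.'' The $\delta$-perturbation argument sketched above avoids it entirely and is valid for every $a\in\overline{\mathbb{D}}$.
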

	\begin{lemma}\label{Lemma 5}
		Let $p(z)$ and $P(z)$ be two polynomials such that $\deg p(z) \leq \deg P(z) = n$. If $P(z)$ has all zeros in $\mathbb{D}$ and $|p(z)| \leq |P(z)|$ for $z \in B(\mathbb{D})$, then for any complex number $\beta$ with $\beta \in \overline{\mathbb{D}}$ and $R \geq 1$, we have for $z \in B(\mathbb{D})$
		\begin{equation}
			\left| \tilde{S}_{a}[p](Rz) - \beta \tilde{S}_{a}[p](z) \right| \leq \left| \tilde{S}_{a}[P](Rz) - \beta \tilde{S}_{a}[P](z) \right|.
		\end{equation}
		The result is sharp and equality holds if $a \in \overline{\mathbb{D}}$ is not the exceptional value for the polynomial $p(z) = e^{i\gamma}P(z)$, where $\gamma \in \mathbb{R}$ and $P(z)$ is any polynomial having all the zeros in $\overline{\mathbb{D}}$ and strict inequality holds for $z \in \mathbb{D}$, unless $p(z) = e^{i\gamma} P(z)$, $\gamma \in \mathbb{R}$.
	\end{lemma}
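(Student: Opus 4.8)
The plan is to derive the inequality from zero-location facts rather than from a direct estimate. Write $f:=\tilde{S}_{a}[p]$ and $F:=\tilde{S}_{a}[P]$, and introduce the dilation--difference operator $\mathcal{T}[q](z):=q(Rz)-\beta q(z)$, so that the assertion reads $|\mathcal{T}[f](z)|\le|\mathcal{T}[F](z)|$ for $z\in B(\mathbb{D})$. Since both $\tilde{S}_{a}$ and $\mathcal{T}$ are linear, the natural route is a normalisation argument of the same flavour as the proof of Lemma \ref{lemma 3}: fix $\lambda\in\mathbb{C}$ with $|\lambda|>1$ and locate the zeros of
\[
\mathcal{T}[f]-\lambda\,\mathcal{T}[F]=\mathcal{T}\big[\tilde{S}_{a}[p-\lambda P]\big].
\]

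First I would show that $p-\lambda P$ has all its zeros in $\mathbb{D}$ for every $|\lambda|>1$. As $P$ has all zeros in $\mathbb{D}$, it is zero-free on $B(\mathbb{D})$, so $|p(z)|\le|P(z)|<|\lambda P(z)|$ there; Rouch\'e's theorem then forces $p-\lambda P$ to have exactly $n$ zeros in $\mathbb{D}$, and since $\deg(p-\lambda P)\le n$ these account for all of its zeros. Applying Lemma \ref{lemma 1} to $p-\lambda P$ (with $a\in\overline{\mathbb{D}}$ taken non-exceptional, as in the hypothesis) shows that $\tilde{S}_{a}[p-\lambda P]=f-\lambda F$ has all its zeros in $\overline{\mathbb{D}}$.

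The key step, and the one I expect to be the \emph{main obstacle}, is a zero-preservation statement for $\mathcal{T}$ itself: if $\Phi$ has all zeros in $\overline{\mathbb{D}}$, then for $R\ge 1$ and $|\beta|\le 1$ the polynomial $\mathcal{T}[\Phi](z)=\Phi(Rz)-\beta\Phi(z)$ again has all zeros in $\overline{\mathbb{D}}$. I would prove this by factoring $\Phi(z)=c\prod_{j=1}^{n}(z-z_{j})$ with $|z_{j}|\le 1$ and writing
\[
\frac{\Phi(Rz)}{\Phi(z)}=\prod_{j=1}^{n}\frac{Rz-z_{j}}{z-z_{j}}.
\]
A direct computation gives, for $|z|>1$,
\[
|Rz-z_{j}|^{2}-|z-z_{j}|^{2}=(R-1)\big[(R+1)|z|^{2}-2\,\mathrm{Re}(z\overline{z_{j}})\big]\ge (R-1)\,|z|\big((R+1)|z|-2\big)>0,
\]
so every factor exceeds $1$ in modulus and $|\Phi(Rz)/\Phi(z)|>1\ge|\beta|$ for $|z|>1$. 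Hence $\Phi(Rz)=\beta\Phi(z)$ is impossible outside $\overline{\mathbb{D}}$, which is the claim. The delicate points are the strictness of the estimate (secured by $|z|>1$, including the sub-case $R=1$) and the degenerate case $R=1,\ \beta=1$, where $\mathcal{T}[\Phi]\equiv 0$ and the lemma is trivial.

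Finally I would assemble the pieces. By the second and third steps, $\mathcal{T}[f]-\lambda\,\mathcal{T}[F]=\mathcal{T}[f-\lambda F]$ has all its zeros in $\overline{\mathbb{D}}$, hence never vanishes for $|z|>1$, for every $|\lambda|>1$. Since $\mathcal{T}[F]$ is itself zero-free on $\{|z|>1\}$ (the key step applied to $F$, which lies in $\overline{\mathbb{D}}$ by Lemma \ref{lemma 1}), the quotient $\mathcal{T}[f]/\mathcal{T}[F]$ cannot attain any value of modulus $>1$ on $\{|z|>1\}$; the normalisation trick of Lemma \ref{lemma 3} then yields $|\mathcal{T}[f](z)|\le|\mathcal{T}[F](z)|$ for $|z|>1$, and letting $|z|\downarrow 1$ gives the stated inequality on $B(\mathbb{D})$. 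The sharpness and equality assertions follow by taking $p=e^{i\gamma}P$, for which linearity makes the two sides coincide, together with the exceptional-value convention inherited from Lemma \ref{lemma 1}.
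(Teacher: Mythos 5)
The paper never proves this lemma: it is quoted verbatim from Shah and Fatima \cite{WMS}, so there is no internal proof to compare against. Your argument is a legitimate self-contained derivation in the same spirit as the paper's other proofs (Rouch\'e normalisation plus the zero-preservation Lemma \ref{lemma 1}), and the skeleton is sound: $p-\lambda P$ has all its zeros in $\mathbb{D}$ for $|\lambda|>1$; the operator image keeps its zeros in $\overline{\mathbb{D}}$; the dilation--difference $\Phi(Rz)-\beta\Phi(z)$ preserves the zero location; and the usual choice of $\lambda$ at a putative bad point yields a contradiction, after which one lets $|z|\downarrow 1$.

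Two caveats. First, your strictness claim fails at $R=1$: there $|Rz-z_j|^2-|z-z_j|^2=(R-1)\left[(R+1)|z|^2-2\,\mathrm{Re}(z\overline{z_j})\right]$ vanishes identically, so each factor has modulus exactly $1$, not greater than $1$; the parenthetical assertion that strictness is ``secured by $|z|>1$, including the sub-case $R=1$'' is false. The case $R=1$ must instead be dispatched separately via $\mathcal{T}[\Phi]=(1-\beta)\Phi$ together with $|\tilde{S}_a[p](z)|\le|\tilde{S}_a[P](z)|$ for $|z|\ge 1$, which your first two steps already supply. Second, beware the paper's convention: the ``equivalent'' displays (\ref{15}), (\ref{17}), (\ref{19}) and the computation in the proof of Theorem \ref{Theorem 3.1} show that $\tilde{S}_a[p](Rz)$ there denotes $\tilde{S}_a$ applied to the polynomial $z\mapsto p(Rz)$, namely $(1+az)Rp'(Rz)-nap(Rz)$, and not the polynomial $\tilde{S}_a[p]$ evaluated at $Rz$, which is $(1+aRz)p'(Rz)-nap(Rz)$. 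You prove the latter version. The same three steps give the intended version if you compose the operators in the other order --- apply your dilation--difference step to $p-\lambda P$ itself (whose zeros lie in $\mathbb{D}$) and only afterwards apply $\tilde{S}_a$ and Lemma \ref{lemma 1} --- so the repair is purely organisational; but as written your inequality is not literally the one invoked in the proofs of Theorems \ref{Theorem 3.2} and \ref{Theorem 3.3}.
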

	We now prove the following result which is a compact generalization of inequalities (\ref{1}) and (\ref{2}).
	\section{Main Results}
	\begin{theorem}\label{Theorem 3.1}
		If \( p(z) \) is a polynomial of degree \( n \), then, for every real or complex number $\beta, |\beta| \le 1$ and $R\ge1$,
		\begin{eqnarray} \label{14}
			\big| \tilde{S}_a[p](Rz)- \beta \tilde{S}_a[p](z)\big| \le \big|R^n-\beta \big| \big|\tilde{S}_a[E_n](z)\big|\max_{z \in B(\mathbb{D})}|p(z)| \quad for \quad  z \in\mathbb{C} \setminus \mathbb{D}.
		\end{eqnarray}
		Equivalently for $R>1$
		\begin{align}\label{15}
		\notag	&\big|(1 + az)[R P^{\prime}(Rz) - \beta P^{\prime}
			(z)]-na[P(Rz) - \beta P(z)]\big|~~~~~~~~~~~~~\\ &~~~~~~~~~~~ \qquad \qquad
			 \le n |R^n-\beta||z|^{n-1} {\max_{z\in B(\mathbb{D})}} |p(z)|  \quad for \quad  z \in\mathbb{C} \setminus \mathbb{D},
		\end{align}
		where $E_n(z)=z^n$. The result is sharp and holds for $p(z) = \gamma z^n,~\gamma \ne0$.
	\end{theorem}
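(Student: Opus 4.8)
The plan is to dominate $p$ on the unit circle by the simplest extremal polynomial and feed this into Lemma \ref{Lemma 5}. Write $M:=\max_{z\in B(\mathbb{D})}|p(z)|$ and assume $M>0$ (if $M=0$ then $p\equiv0$ and both sides of \eqref{14} vanish). Take $a\in\overline{\mathbb{D}}$ as in Lemma \ref{Lemma 5}, and set $P(z):=Mz^n$. This $P$ has all its zeros in $\mathbb{D}$, has degree exactly $n\ge\deg p$, and satisfies $|p(z)|\le M=|P(z)|$ for $z\in B(\mathbb{D})$, so the hypotheses of Lemma \ref{Lemma 5} hold for the pair $(p,P)$. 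Hence, for every $\beta$ with $|\beta|\le1$ and every $R\ge1$, for $z\in B(\mathbb{D})$,
\[
\big|\tilde{S}_a[p](Rz)-\beta\tilde{S}_a[p](z)\big|\le\big|\tilde{S}_a[P](Rz)-\beta\tilde{S}_a[P](z)\big|.
\]
Since $\tilde{S}_a[z^n](z)=nz^{n-1}$ and $\tilde{S}_a$ is linear, a direct computation with $P(Rz)=MR^{n}z^{n}$ gives $\tilde{S}_a[P](Rz)-\beta\tilde{S}_a[P](z)=M(R^{n}-\beta)\,nz^{n-1}=M(R^{n}-\beta)\tilde{S}_a[E_n](z)$. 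This already establishes \eqref{14} on the boundary $B(\mathbb{D})$.

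The remaining task is to pass from $B(\mathbb{D})$ to all of $\mathbb{C}\setminus\mathbb{D}$. The decisive observation is that $\tilde{S}_a$ lowers degrees: for any $h\in\mathbb{P}_n$ the top-order terms in $(1+az)h'(z)-na\,h(z)$ cancel, so $G:=\tilde{S}_a[h]$ is a polynomial of degree at most $n-1$. Applying this to $h:=p(R\,\cdot)-\beta p$ and writing $G(z):=\tilde{S}_a[p](Rz)-\beta\tilde{S}_a[p](z)=\tilde{S}_a[h](z)$, the quotient $G(z)/z^{\,n-1}$ is a polynomial in $1/z$ of degree at most $n-1$, hence analytic on $\{|z|\ge1\}\cup\{\infty\}$. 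By the maximum modulus principle (applied in the variable $u=1/z$ on $\overline{\mathbb{D}}$),
\[
\max_{|z|\ge1}\frac{|G(z)|}{|z|^{\,n-1}}=\max_{z\in B(\mathbb{D})}|G(z)|.
\]
Combining this with the boundary bound from the first step and with $|\tilde{S}_a[E_n](z)|=n|z|^{n-1}$ yields, for $z\in\mathbb{C}\setminus\mathbb{D}$,
\[
|G(z)|\le|z|^{\,n-1}\max_{w\in B(\mathbb{D})}|G(w)|\le n|z|^{n-1}|R^{n}-\beta|\,M=|R^{n}-\beta|\,\big|\tilde{S}_a[E_n](z)\big|\,M,
\]
which is exactly \eqref{14}. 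The equivalent form \eqref{15} follows by substituting $\tilde{S}_a[p](z)=(1+az)p'(z)-na\,p(z)$ and $\tilde{S}_a[E_n](z)=nz^{n-1}$ and regrouping, while sharpness is verified directly on $p(z)=\gamma z^{n}$, for which both sides equal $|\gamma|\,n|z|^{n-1}|R^{n}-\beta|$.

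I expect the only delicate point to be this extension step: one must first notice the degree drop under $\tilde{S}_a$, for it is exactly what makes $G(z)/z^{\,n-1}$ regular at infinity and thereby licenses the exterior maximum modulus argument; without that observation the passage from $B(\mathbb{D})$ to $\mathbb{C}\setminus\mathbb{D}$ is not automatic. A parallel route would bypass Lemma \ref{Lemma 5} and instead invoke Lemma \ref{lemma 3}: by linearity $G=\tilde{S}_a[p(R\,\cdot)-\beta p]$, so Lemma \ref{lemma 3} bounds $|G(z)|$ on $\mathbb{C}\setminus\mathbb{D}$ by $\big(\max_{B(\mathbb{D})}|p(R\,\cdot)-\beta p|\big)n|z|^{n-1}$, reducing the theorem to the scalar growth estimate $\max_{z\in B(\mathbb{D})}|p(Rz)-\beta p(z)|\le|R^{n}-\beta|M$. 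Establishing that scalar estimate is then the genuine obstacle of the alternative approach, which is precisely what the Lemma \ref{Lemma 5} route avoids, so I would carry out the argument as in the first two paragraphs.
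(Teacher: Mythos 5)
Your argument is correct, but it takes a genuinely different route from the paper's. The paper proceeds by zero location: for $|\lambda|>1$ it forms $F=p+\lambda M z^{n}$, places all zeros of $F$ in $\overline{\mathbb{D}}$ by Rouch\'e's theorem, invokes Lemma \ref{lemma 1} to conclude the same for $\tilde{S}_a[F]$, then asserts that $\tilde{S}_a[F](Rz)-\beta\tilde{S}_a[F](z)$ is zero-free in $\mathbb{C}\setminus\mathbb{D}$ and extracts (\ref{14}) by choosing $\lambda$ to force a contradiction at a putative point $z_0$. You instead apply Lemma \ref{Lemma 5} to the pair $(p,\,Mz^{n})$, which delivers (\ref{14}) on $B(\mathbb{D})$ at once after the computation $\tilde{S}_a[Mz^n](Rz)-\beta\tilde{S}_a[Mz^n](z)=M(R^{n}-\beta)nz^{n-1}$, and you then pass to $\mathbb{C}\setminus\mathbb{D}$ via the degree drop $\deg\tilde{S}_a[h]\le n-1$ together with the maximum principle applied to $G(z)/z^{n-1}$ in the variable $1/z$. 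Your version is shorter and has the merit of making the exterior-extension step fully explicit; its cost is that it leans on Lemma \ref{Lemma 5} as a black box (a Shah--Fatima result that the paper reserves for Theorems \ref{Theorem 3.2} and \ref{Theorem 3.3}), whereas the paper's proof is self-contained modulo Lemma \ref{lemma 1}. It is worth noting that the paper's own argument is loosest exactly where yours is careful: the claim that ``by the application of Lemma \ref{lemma 1}'' the polynomial $\tilde{S}_a[F](Rz)-\beta\tilde{S}_a[F](z)$ has all its zeros in $\mathbb{D}$ is not a consequence of Lemma \ref{lemma 1} alone and needs a growth estimate in the spirit of Lemma \ref{lemma 2}; your route sidesteps that issue entirely.
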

	\begin{corollary}
		 For $\beta=0,~a=0$ and $R=1$, the inequality (\ref{15}) reduces to  
		\begin{eqnarray*}
			|P^{\prime}(z)| \le n |z|^{n-1}\max_{z \in B(\mathbb{D})}|P(z)| \quad for \quad  z \in\mathbb{C} \setminus \mathbb{D}
		\end{eqnarray*}
		which in particular gives inequality (\ref{1}). The equality holds for $p(z) = \gamma z^n,~\gamma \ne0$.
	\end{corollary}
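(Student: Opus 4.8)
The plan is to derive the corollary directly from the already-proved inequality (\ref{15}) by specialization, so no fresh analytic input is required beyond careful bookkeeping. First I would set $\beta = 0$, $a = 0$, and $R = 1$ in the left-hand side of (\ref{15}). The choice $a = 0$ makes the prefactor $(1+az)$ equal to $1$ and annihilates the entire term $na[P(Rz) - \beta P(z)]$, while $R = 1$ together with $\beta = 0$ reduces $RP'(Rz) - \beta P'(z)$ to $P'(z)$; thus the left-hand side collapses to $|P'(z)|$.

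Next I would apply the same substitution on the right-hand side. Here $|R^n - \beta|$ becomes $|1 - 0| = 1$, so the bound simplifies to $n|z|^{n-1}\max_{z \in B(\mathbb{D})}|P(z)|$. Putting the two halves together gives
\begin{equation*}
|P'(z)| \le n|z|^{n-1}\max_{z \in B(\mathbb{D})}|P(z)| \qquad \text{for } z \in \mathbb{C}\setminus\mathbb{D},
\end{equation*}
which is precisely the displayed inequality of the corollary.

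To see that this contains the classical Bernstein inequality (\ref{1}), I would restrict to the boundary $B(\mathbb{D}) \subset \mathbb{C}\setminus\mathbb{D}$, where $|z| = 1$ forces $|z|^{n-1} = 1$. The pointwise bound then reads $|P'(z)| \le n\max_{z \in B(\mathbb{D})}|P(z)|$ for every $z \in B(\mathbb{D})$, and taking the supremum of the left-hand side over $B(\mathbb{D})$ yields (\ref{1}) exactly.

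For the sharpness claim I would substitute $P(z) = \gamma z^n$ with $\gamma \ne 0$: then $P'(z) = n\gamma z^{n-1}$ gives left-hand side $n|\gamma||z|^{n-1}$, while $\max_{z \in B(\mathbb{D})}|P(z)| = |\gamma|$ makes the right-hand side equal $n|z|^{n-1}|\gamma|$, so the two sides agree. There is no genuine obstacle here—Theorem \ref{Theorem 3.1} supplies all the substance—so the only points demanding attention are tracking which terms of (\ref{15}) vanish under the substitution and observing that the restriction $|z| = 1$ is exactly what converts the pointwise estimate on $\mathbb{C}\setminus\mathbb{D}$ into the sup-norm inequality (\ref{1}).
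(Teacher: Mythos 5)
Your proposal is correct and matches the paper's intent exactly: the corollary is an immediate specialization of (\ref{15}) (equivalently (\ref{14}), which is stated for $R\ge 1$), and the paper offers no separate argument beyond this substitution. Your bookkeeping of the vanishing terms, the restriction to $|z|=1$ to recover (\ref{1}), and the equality check for $P(z)=\gamma z^n$ are all accurate.
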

	\begin{remark}
		For $\beta=0$ and $R>1$, the inequality (\ref{14}) reduces to a result due to Fatima and Shah \cite{WM}.
	\end{remark} 
	
	\begin{theorem} \label{Theorem 3.2}
		Let $P \in  \mathbb{P}_{n}$ and $Q(z)=z^n \overline{p(\frac{1}{\bar{z}})}$, then for every real or complex $\beta$ with $|\beta|\le1$ and $R>1$, 
		\begin{align} \label{16}
	\notag	&	|\tilde{S}_a[P](Rz) - \beta \tilde{S}_a[P](z)|+|\tilde{S}_a[Q](Rz) - \beta \tilde{S}_a[Q](z)| \\
			 &\qquad\qquad\qquad\le \left\{|R^n-\beta| \tilde{S}_a[E_n](z)+n|1- \beta||a|\right\} \max_{z \in B(\mathbb{D})}|p(z)|  \quad for \quad z \in \mathbb{C} \setminus \mathbb{D}.
		\end{align}
			\text{Equivalently}
		\begin{align} \label{17}
			\notag~~	&\bigg|(1 + az)[R P^{\prime}(Rz) - \beta P^{\prime}
			(z)]-na[P(Rz) - \beta P(z)]\\ &  \notag + (1 + az)[R Q^{\prime}(Rz) - \beta Q^{\prime}
			(z)]-na[Q(Rz) - \beta Q(z)]\bigg|\\ & \qquad \qquad
			\le \left\{n |R^n-\beta|  |z|^{n-1}+n|1- \beta||a|\right\} \max_{z \in B(\mathbb{D})}|p(z)|  \quad for \quad z \in \mathbb{C} \setminus \mathbb{D}.
		\end{align}
	\end{theorem}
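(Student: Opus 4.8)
The plan is to exhibit \eqref{16} as the exact two--term (Erd\"{o}s--Lax--Aziz type) analogue of Lemma~\ref{lemma 4} for the single composite operator sitting on its left--hand side, and then to reduce that two--term estimate to a sharp one--term estimate on self--inversive polynomials.

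First I would set $\Phi[h](z):=h(Rz)-\beta h(z)$, so that the operator appearing in \eqref{16} is $\mathcal{T}[h]:=\tilde{S}_a[\Phi[h]]$; it is linear since $\Phi$ and $\tilde{S}_a$ are. Using $\tilde{S}_a[E_n](z)=(1+az)nz^{n-1}-naz^{n}=nz^{n-1}$, a direct computation gives $\mathcal{T}[E_n](z)=(R^n-\beta)nz^{n-1}$ and $\mathcal{T}[1](z)=\tilde{S}_a[1-\beta]=-na(1-\beta)$, hence $|\mathcal{T}[E_n](z)|=|R^n-\beta|\,|\tilde{S}_a[E_n](z)|$ and $|\mathcal{T}[1](z)|=n|1-\beta||a|$. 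Since $E_n^{*}=z^{n}\overline{(1/\bar z)^{n}}=1$, the right--hand side of \eqref{16} is exactly $\{|\mathcal{T}[E_n](z)|+|\mathcal{T}[E_n^{*}](z)|\}\max_{z\in B(\mathbb{D})}|P(z)|$, which makes the $\mathcal{T}$--analogy with Lemma~\ref{lemma 4} transparent. The equivalent display \eqref{17} is then obtained merely by writing $\mathcal{T}[P](z)=(1+az)[RP'(Rz)-\beta P'(z)]-na[P(Rz)-\beta P(z)]$ (and the same for $Q$) and substituting $|\tilde{S}_a[E_n](z)|=n|z|^{n-1}$, so it needs no separate argument once \eqref{16} holds.

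Next, since $Q=P^{*}$ and hence $|P(z)|=|Q(z)|$ on $B(\mathbb{D})$, I would use $|A|+|B|=\max_{|\lambda|=1}|A+\lambda B|$ with the linearity of $\mathcal{T}$ to write $|\mathcal{T}[P](z)|+|\mathcal{T}[Q](z)|=\max_{|\lambda|=1}|\mathcal{T}[P+\lambda Q](z)|$. For $|\lambda|=1$ the polynomial $F_\lambda:=P+\lambda Q$ obeys $F_\lambda^{*}=\bar\lambda F_\lambda$, i.e.\ it is self--inversive, and $\max_{z\in B(\mathbb{D})}|F_\lambda(z)|\le 2\max_{z\in B(\mathbb{D})}|P(z)|$. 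It therefore suffices to establish, for every self--inversive $F$ of degree $n$ and every $z\in\mathbb{C}\setminus\mathbb{D}$,
\[
|\mathcal{T}[F](z)|\le \frac{1}{2}\big(|\mathcal{T}[E_n](z)|+|\mathcal{T}[E_n^{*}](z)|\big)\max_{z\in B(\mathbb{D})}|F(z)|,
\]
because feeding this back gives $\tfrac12\cdot 2\max|P|=\max|P|$ and recovers \eqref{16}.

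The self--inversive estimate is the heart of the matter, and I would prove it by transplanting the method behind Lemma~\ref{lemma 3} and Lemma~\ref{lemma 4} to $\mathcal{T}$. The one structural input needed is that $\mathcal{T}$ maps polynomials with all zeros in $\overline{\mathbb{D}}$ into polynomials with all zeros in $\overline{\mathbb{D}}$: for such $h$, Lemma~\ref{lemma 2} (with $k=r=1$ and $R>1$) yields $|h(Rz)|\ge\big(\tfrac{R+1}{2}\big)^{n}|h(z)|>|\beta h(z)|$ on $B(\mathbb{D})$, so Rouch\'e puts all zeros of $\Phi[h]=h(Rz)-\beta h(z)$ in $\overline{\mathbb{D}}$, and Lemma~\ref{lemma 1} then keeps all zeros of $\tilde{S}_a[\Phi[h]]$ in $\overline{\mathbb{D}}$. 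With this zero--preservation, the estimate follows by a Rouch\'e comparison of $F$ against the extremal family $\tfrac12 M_F(\mu E_n+\nu)$, $|\mu|=|\nu|=1$, exactly as $\lambda M z^{n}$ was used in Lemma~\ref{lemma 3}. I expect the genuine obstacle to be the degree bookkeeping that separates \eqref{16} from the weaker bound one would get by applying Lemma~\ref{lemma 4} to $\Phi[P]$ directly: one must track that $\Phi$ scales the top coefficient of $F$ by $R^n-\beta$ but its constant term only by $1-\beta$, so that the constant contribution to $\mathcal{T}[F]$ is governed by $|\mathcal{T}[E_n^{*}](z)|=n|1-\beta||a|$ rather than by $|R^n-\beta|$; the self--inversive identity $|F(0)|=|\text{leading coefficient of }F|$ is precisely what prevents the intermediate coefficients from pushing the sum past the claimed bound. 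Finally I would confirm sharpness with $P(z)=\gamma z^{n}$, for which $Q=\bar\gamma$, $|\mathcal{T}[P](z)|=|\gamma||R^n-\beta|n|z|^{n-1}$ and $|\mathcal{T}[Q](z)|=|\gamma|n|1-\beta||a|$, so that both sides of \eqref{16} coincide.
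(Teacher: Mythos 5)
Your opening reduction is correct and even elegant: with $\Phi[h](z)=h(Rz)-\beta h(z)$ and $\mathcal{T}:=\tilde{S}_a\circ\Phi$, the identity $|\mathcal{T}[P](z)|+|\mathcal{T}[Q](z)|=\max_{|\lambda|=1}|\mathcal{T}[P+\lambda Q](z)|$ does reduce \eqref{16} to a one-term bound for the self-inversive polynomials $F_\lambda=P+\lambda Q$, and your computations $\mathcal{T}[E_n](z)=(R^n-\beta)nz^{n-1}$ and $\mathcal{T}[1](z)=-na(1-\beta)$ are right. But the entire weight of the proof now rests on the displayed self-inversive estimate, and the argument you sketch for it is broken. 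Rouch\'e's theorem requires strict domination on all of $B(\mathbb{D})$, and your comparison polynomial $\tfrac{1}{2}M_F(\mu E_n+\nu)$ with $|\mu|=|\nu|=1$ vanishes at $n$ points of the unit circle, so the needed inequality $|F(z)|<\tfrac{1}{2}M_F|\lambda|\,|\mu z^n+\nu|$ cannot hold there. Worse, the conclusion you want from that comparison is simply false: for the self-inversive polynomial $F(z)=z^n+1$ (so $M_F=2$), taking $\mu=1$, $\nu=-1$ gives $F(z)-\lambda\tfrac{M_F}{2}(\mu z^n+\nu)=(1-\lambda)z^n+(1+\lambda)$, whose zeros have modulus $\left(|1+\lambda|/|1-\lambda|\right)^{1/n}$, arbitrarily large as $\lambda\to1$. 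So no choice of ``degree bookkeeping'' or appeal to $|F(0)|=|a_n|$ rescues this step. What your zero-preservation argument (Lemma \ref{lemma 2} with $k=r=1$, Rouch\'e, Lemma \ref{lemma 1}) actually delivers is only the analogue of Lemma \ref{lemma 3} for $\mathcal{T}$, i.e.\ Theorem \ref{Theorem 3.1} applied to $P$ and to $Q$ separately, which is strictly weaker than \eqref{16}.

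The ingredient you are missing is Lemma \ref{Lemma 5}, which is how the paper proceeds. It sets $F(z)=P(z)+\alpha M$ with $|\alpha|>1$ and $M=\max_{z\in B(\mathbb{D})}|p(z)|$, so that $F$ has no zero in $\mathbb{D}$ while its conjugate $F^{*}(z)=Q(z)+\bar{\alpha}Mz^{n}$ has all zeros in $\overline{\mathbb{D}}$ and satisfies $|F|=|F^{*}|$ on $B(\mathbb{D})$. Lemma \ref{Lemma 5} (with Theorem \ref{Theorem 3.1}) applied to this pair gives $|\mathcal{T}[F](z)|\le|\mathcal{T}[F^{*}](z)|$; expanding both sides by linearity of $\mathcal{T}$, choosing the argument of $\alpha$ so that the right-hand modulus splits as $|\alpha|\,|R^n-\beta|\,|\tilde{S}_a[E_n](z)|M-|\mathcal{T}[Q](z)|$, applying the triangle inequality on the left, rearranging and letting $|\alpha|\to1$ yields exactly \eqref{16}. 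If you wish to keep your self-inversive route you must supply an independent proof of the self-inversive estimate, and the standard proofs of such estimates themselves pass through the two-term inequality you are trying to establish, so without a genuinely new argument the reduction is circular rather than a proof.
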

	\begin{corollary}
		 If $P(z)$ is a polynomial of degree $n$, then for $\beta=1$, $a=0$  and $R \ge 1$ in inequality (\ref{17}), we get
		\begin{eqnarray*}
			\bigg|RP^{\prime}(Rz) -  P^{\prime}(z) \bigg|
			+\bigg|R Q^{\prime}(Rz) - Q^{\prime}(z) \bigg|
			\le	n (R^n-1)  |z|^{n-1}\max_{z \in B(\mathbb{D})}|P(z)|,
		\end{eqnarray*}
		where $E_n(z)=z^n$. The result is best possible and the equality holds for $p(z) = \gamma z^n,~\gamma \ne0$. Theorem \ref{Theorem 3.2} includes a result due
		to Rahman \cite{QR} as a special case.
	\end{corollary}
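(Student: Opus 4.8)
The plan is to obtain this corollary as a direct specialization of inequality (\ref{17}) in Theorem \ref{Theorem 3.2}, taking $\beta = 1$, $a = 0$ and $R \ge 1$, and then to verify sharpness by an explicit extremal polynomial. No new estimate is required: the entire analytic content already lives in Theorem \ref{Theorem 3.2}, so the work is purely the bookkeeping of the substitution together with a short equality check. Throughout I read the left side of (\ref{17}) as the two absolute-value blocks matching (\ref{16}), the first built from $P$ and the second from $Q$, and I keep the ambient domain $z \in \mathbb{C}\setminus\mathbb{D}$ inherited from Theorem \ref{Theorem 3.2}.

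First I would put $a = 0$ in (\ref{17}). This collapses each factor $(1+az)$ to $1$ and annihilates every term carrying the factor $na$, so the first block becomes $\big| R P^{\prime}(Rz) - \beta P^{\prime}(z)\big|$ and the second becomes $\big| R Q^{\prime}(Rz) - \beta Q^{\prime}(z)\big|$. Setting $\beta = 1$ then produces exactly $\big| R P^{\prime}(Rz) - P^{\prime}(z)\big| + \big| R Q^{\prime}(Rz) - Q^{\prime}(z)\big|$, the left side of the asserted inequality. On the right side the same two choices give $n|R^n - \beta|\,|z|^{n-1} + n|1-\beta|\,|a| = n|R^n - 1|\,|z|^{n-1} + 0$; since $R \ge 1$ forces $R^n - 1 \ge 0$, the modulus is removed and the bound reads $n(R^n - 1)|z|^{n-1}\max_{z \in B(\mathbb{D})}|P(z)|$. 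This is the claimed estimate for $z \in \mathbb{C}\setminus\mathbb{D}$.

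For the sharpness assertion I would test $p(z) = \gamma z^n$ with $\gamma \ne 0$. Its conjugate reciprocal is $Q(z) = z^n\overline{p(1/\bar z)} = \bar\gamma$, a constant, so $Q^{\prime}(z) = 0$ and the second block vanishes identically. For the first block, $P^{\prime}(z) = n\gamma z^{n-1}$ yields $R P^{\prime}(Rz) - P^{\prime}(z) = n\gamma (R^n - 1)z^{n-1}$, so the left side equals $n|\gamma|(R^n - 1)|z|^{n-1}$; and since $\max_{z \in B(\mathbb{D})}|P(z)| = |\gamma|$, the right side equals the same quantity. Hence equality holds and the inequality is best possible. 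The only care needed here is in the computation of $Q$ for the extremal polynomial; everything else is routine.

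I do not expect any genuine obstacle: the derivation is transparent bookkeeping whose sole subtle points are using $R \ge 1$ to drop the modulus on $R^n - 1$ and noting that $\beta = 1$ already kills the $n|1-\beta|\,|a|$ term independently of $a$, so the reductions are mutually consistent. The closing remark that Theorem \ref{Theorem 3.2} subsumes Rahman's result is a contextual observation about Theorem \ref{Theorem 3.2} rather than a step in the corollary's proof, and follows by comparing the present special case with the classical bound $|P^{\prime}(z)| + |Q^{\prime}(z)| \le n\max_{z \in B(\mathbb{D})}|P(z)|$ on $B(\mathbb{D})$; it requires no separate argument here.
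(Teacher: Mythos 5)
Your proposal is correct and matches the paper's (implicit) derivation: the corollary is obtained purely by substituting $\beta=1$, $a=0$ into the theorem, noting $\tilde{S}_0[P](z)=P^{\prime}(z)$ and $|\tilde{S}_a[E_n](z)|=n|z|^{n-1}$, and checking equality for $p(z)=\gamma z^n$ (where $Q\equiv\bar{\gamma}$ kills the second block). Your observation that the two-absolute-value form of the conclusion should be traced to (\ref{16}) rather than to the single-modulus display (\ref{17}) is the right reading of the paper's typography.
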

\begin{remark}
	 If we take $\beta=0$ and $R=1$ in (\ref{16}), then the inequality reduces to Lemma \ref{lemma 4}
	\begin{eqnarray*}
		\left| \tilde{S}_{a}[P](z) \right| 
		+ \left| \tilde{S}_{a}[Q](z) \right| 
		\leq 
		\bigg[ \tilde{S}_a[E_n](z) 
		+ n|a|\bigg] \max_{z \in B(\mathbb{D})}|P(z)|.
	\end{eqnarray*}
\end{remark}
	\begin{theorem} \label{Theorem 3.3}
		Let $P \in  \mathbb{P}_{n}$ such that  $P(z)$ is a polynomial of degree n which does not vanish in $\mathbb{D}$ and $Q(z)=z^n \overline{p(\frac{1}{\bar{z}})}$, then for every real or complex $\beta$ with $|\beta|\le1$ and $R>1$
		\begin{align} \label{18}
			&|\tilde{S}_a[P](Rz) - \beta \tilde{S}_a[P](z)| \notag \\ 
			& \quad \le \left\{\frac{|R^n-\beta| \tilde{S}_a[E_n](z)+n|1- \beta||a|}{2} \right\} \max_{z \in B(\mathbb{D})}|p(z)|  \quad for \quad z \in \mathbb{C} \setminus \mathbb{D}.
		\end{align}
			\text{Equivalently}
		\begin{align} \label{19}
			\notag	&|(1 + az)[R P^{\prime}(Rz) - \beta P^{\prime}
			(z)]-na[P(Rz) - \beta P(z)]\\ & \qquad \qquad
			\le \left\{ \frac{|R^n-\beta| n |z|^{n-1}+n|1- \beta||a|}{2}\right\} \max_{z \in B(\mathbb{D})}|p(z)|  \quad for \quad z \in \mathbb{C} \setminus \mathbb{D},
		\end{align}
		where $E_n(z)=z^n$. The result is best possible and the equality holds for $p(z) = \gamma z^n,~\gamma \ne0$.
	\end{theorem}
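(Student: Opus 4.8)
The plan is to deduce Theorem~\ref{Theorem 3.3} from Theorem~\ref{Theorem 3.2} by using the extra hypothesis that $P$ has no zeros in $\mathbb{D}$ to discard one of the two summands that appear there. Set $Q(z)=z^n\overline{P(1/\bar z)}$. Since $\deg P=n$ and $P$ does not vanish in $\mathbb{D}$ we have $P(0)\neq0$, so $\deg Q=n$; and because each zero of $P$ lies in $\{|z|\ge 1\}$, each zero of $Q$, being a reciprocal of a zero of $P$, lies in $\overline{\mathbb{D}}$. Moreover $|Q(z)|=|P(z)|$ for $z\in B(\mathbb{D})$. I would record these three facts first, since they are precisely what is needed to dominate $P$ by $Q$ through the operator $\tilde{S}_a$.

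The key step is the pointwise comparison
\[
\left|\tilde{S}_a[P](Rz)-\beta\tilde{S}_a[P](z)\right|\le\left|\tilde{S}_a[Q](Rz)-\beta\tilde{S}_a[Q](z)\right|,\qquad z\in\mathbb{C}\setminus\mathbb{D}.
\]
This is Lemma~\ref{Lemma 5} read with the roles interchanged: $Q$ (all zeros in $\overline{\mathbb{D}}$) is the dominating polynomial and $P$ the dominated one, the required hypothesis $|P(z)|\le|Q(z)|$ on $B(\mathbb{D})$ holding with equality. Because Lemma~\ref{Lemma 5} is phrased for a dominating polynomial with zeros in the \emph{open} disc, the case in which $P$ (hence $Q$) has zeros on $B(\mathbb{D})$ must be treated by a limiting argument: replace $P(z)$ by $P(z/\rho)$ with $\rho>1$, whose conjugate polynomial then has all zeros strictly inside $\mathbb{D}$, apply the lemma, and let $\rho\to1^{+}$, invoking continuity of the operator expressions in the coefficients.

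Granting the comparison, the conclusion is one line. Theorem~\ref{Theorem 3.2} gives
\[
\left|\tilde{S}_a[P](Rz)-\beta\tilde{S}_a[P](z)\right|+\left|\tilde{S}_a[Q](Rz)-\beta\tilde{S}_a[Q](z)\right|\le\left\{|R^n-\beta|\,\tilde{S}_a[E_n](z)+n|1-\beta||a|\right\}\max_{z\in B(\mathbb{D})}|p(z)|,
\]
and the comparison shows the second summand on the left dominates the first, whence the left-hand side is at least $2\left|\tilde{S}_a[P](Rz)-\beta\tilde{S}_a[P](z)\right|$; therefore this quantity is at most half the right-hand side, which is exactly (\ref{18}). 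The equivalent form (\ref{19}) then follows by expanding $\tilde{S}_a[P](w)=(1+aw)P'(w)-naP(w)$ with the scaling convention of (\ref{15}) and using the identity $\tilde{S}_a[E_n](z)=nz^{n-1}$, so that $|\tilde{S}_a[E_n](z)|=n|z|^{n-1}$.

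I expect the main obstacle to be the comparison step, and specifically the fact that it is needed on all of $\mathbb{C}\setminus\mathbb{D}$ rather than merely on $B(\mathbb{D})$: one must check that the inequality preserved by the modified Smirnov operator (in the spirit of Theorem~\ref{Theorem 1.2}) carries the $R,\beta$-difference outside the closed disc, and that the limiting argument remains valid when the dominating polynomial $Q$ acquires zeros on the boundary. Once this is secured, the remainder is a routine combination with Theorem~\ref{Theorem 3.2} together with an algebraic rearrangement of $\tilde{S}_a$.
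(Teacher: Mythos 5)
Your proposal matches the paper's own proof essentially verbatim: combine the sum estimate of Theorem \ref{Theorem 3.2} with the comparison $|\tilde{S}_a[P](Rz)-\beta\tilde{S}_a[P](z)|\le|\tilde{S}_a[Q](Rz)-\beta\tilde{S}_a[Q](z)|$ from Lemma \ref{Lemma 5}, and halve. Your added care about $Q$ possibly having zeros on $B(\mathbb{D})$ (handled by a limiting argument) and about Lemma \ref{Lemma 5} being stated only for $z\in B(\mathbb{D})$ goes beyond what the paper records, but the route is the same.
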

		\begin{corollary}
			 For $\beta=0,~a=0$ and $R=1$, the inequality (\ref{19}) reduces to  
			\begin{eqnarray*}
				|P^{\prime}(z)| \le \frac{n}{2} |z|^{n-1}\max_{z \in B(\mathbb{D})}|P(z)| \quad for \quad  z \in\mathbb{C} \setminus \mathbb{D}
			\end{eqnarray*}
			which in particular gives inequality (\ref{3}). The equality holds for $p(z) = \gamma z^n,~\gamma \ne0$.
		\end{corollary}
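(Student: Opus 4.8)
The plan is to obtain this corollary as a direct specialization of inequality (\ref{19}) in Theorem \ref{Theorem 3.3}, which is already established under the hypothesis that $P$ does not vanish in $\mathbb{D}$. First I would record the effect of setting $a=0$ in the modified Smirnov operator. Since $\tilde{S}_a[p](z)=(1+az)p^{\prime}(z)-na\,p(z)$, at $a=0$ the operator collapses to $\tilde{S}_0[p](z)=p^{\prime}(z)$. Consequently, in the left-hand side of (\ref{19}) the factor $(1+az)$ reduces to $1$ and the entire term $na[P(Rz)-\beta P(z)]$ vanishes, leaving only $|RP^{\prime}(Rz)-\beta P^{\prime}(z)|$.

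Next I would substitute the remaining parameters $\beta=0$ and $R=1$. On the left this gives simply $|P^{\prime}(z)|$. On the right the factor $|R^n-\beta|=|1-0|=1$, the term $n|1-\beta||a|=0$ (because $a=0$), and the bracket therefore reduces to $\tfrac{1}{2}\,n|z|^{n-1}$. This yields
$|P^{\prime}(z)|\le\tfrac{n}{2}|z|^{n-1}\max_{z\in B(\mathbb{D})}|P(z)|$ for $z\in\mathbb{C}\setminus\mathbb{D}$, which is the first displayed inequality of the corollary. To recover the classical bound (\ref{3}) I would then restrict to $z\in B(\mathbb{D})$; since $B(\mathbb{D})\subset\mathbb{C}\setminus\mathbb{D}$ and $|z|=1$ there, the factor $|z|^{n-1}$ equals $1$, so taking the maximum over the boundary gives $\max_{z\in B(\mathbb{D})}|P^{\prime}(z)|\le\tfrac{n}{2}\max_{z\in B(\mathbb{D})}|P(z)|$, which is exactly (\ref{3}).

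The one point requiring genuine care is that Theorem \ref{Theorem 3.3} is stated for $R>1$, whereas the corollary takes $R=1$. I would address this by a continuity argument: for fixed $z$ both sides of (\ref{19}) are continuous functions of $R$ on $[1,\infty)$, so the inequality persists in the limit $R\to1^{+}$ and the substitution $R=1$ is legitimate. This is the main (indeed the only) obstacle; every other step is routine algebraic simplification of the already-proven master inequality. Finally, for the sharpness statement I would track the extremal configuration through the specialization: equality in the boundary form (\ref{3}) is attained by the standard extremal polynomials $P(z)=\alpha z^n+\beta$ with $|\alpha|=|\beta|$ (which satisfy the non-vanishing hypothesis since their zeros lie on $B(\mathbb{D})$), and one checks directly that these realize equality in the reduced inequality, so no argument beyond verifying the extremal case survives the substitution is needed.
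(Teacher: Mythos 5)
Your proposal is correct and follows the only natural route, which is also the paper's (unstated) one: direct substitution of $a=0$, $\beta=0$ into (\ref{19}), a continuity argument in $R$ to pass from $R>1$ to $R=1$, and restriction to $|z|=1$ to recover (\ref{3}). Your sharpness discussion is in fact more accurate than the corollary's own claim, since $p(z)=\gamma z^n$ vanishes in $\mathbb{D}$ and gives $|p'(z)|=n|\gamma||z|^{n-1}$, which does not attain the bound $\tfrac{n}{2}|\gamma||z|^{n-1}$; the correct extremals are indeed $\alpha z^n+\beta$ with $|\alpha|=|\beta|$.
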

	\begin{remark}
		For $\beta=0,~R=1$, the inequality (\ref{18}) reduces to a result due to Shah and Fatima \cite{WMS}
		\begin{eqnarray*}
			|\tilde{S}_a[P](z)|  
			\le \frac{1}{2}\left\{\tilde{S}_a[E_n](z)+n|a|\right\} \max_{z \in B(\mathbb{D})}|p(z)|  \quad for \quad z \in \mathbb{C} \setminus \mathbb{D}.
		\end{eqnarray*}
	\end{remark}
	\section{Proofs of the theorems}
	\begin{proof}[Proof of Theorem \ref{Theorem 3.1}]
		For $R=1$, the result is trivial. Henceforth, we assume $R>1$.\\
		If $$\max_{z \in B(\mathbb{D})}|p(z)|=M,$$ then
		\begin{eqnarray*}
			|p(z)|<M\quad \text{for} \quad z \in B(\mathbb{D}).
		\end{eqnarray*}
		Equivalently for every $\lambda$ with $|\lambda|>1$, we have 
		\begin{eqnarray}
			|p(z)|<|M \lambda z^n| \quad\text{for} \quad  z \in  B(\mathbb{D}).
		\end{eqnarray}
		Therefore by Rouche’s theorem, it follows that all the zeros of $ F(z) =p(z)+M\lambda z^n$
		lie in $\mathbb{D}$.
		By Lemma \ref{lemma 1}, it follows that all the zeros of $\tilde{S}_a[F](z)$ lie in $\mathbb{D}$.\\
		So, all the zeros of $\tilde{S}_a[F](z)=\tilde{S}_a[p(z)+M\lambda z^n]$ lie in $\mathbb{D}$.
		Therefore, all the zeros of $\tilde{S}_a[p(z)]+M\lambda \tilde{S}_a[E_n](z)$ lie in $\mathbb{D}$, where $E_n(z)=z^n$.\\
		Now for any $\beta \in \mathbb{C}, ~|\beta|\le1$,
		by using the application of Lemma \ref{lemma 1}, it follows that all the zeros of 
		\begin{eqnarray*}
			\tilde{S}_a\left\{[F](Rz)- \beta [F](z)\right\}&=& (1+az)\left\{RF^{\prime}(Rz)-\beta F^{\prime}(z)\right\}-na\left\{F(Rz)-\beta F(z)\right\} \\
			&=& (1+az)RF^{\prime}(Rz) - na F(Rz) - \beta \left\{(1+az)F^{\prime}(z) -  (na) F(z)\right\}\\
			&=& \tilde{S}_a[F](Rz)- \beta \tilde{S}_a[F](z)
		\end{eqnarray*}
		lie in $\mathbb{D}$ for every $a$ such that $a \in B(\mathbb{D})$ is not the exceptional value of $F$.\\\\
		Since,
		$$\tilde{S}_a[F](z)= \tilde{S}_a[p](z)+\lambda  \tilde{S}_a[E_n](z) M $$
		and
		$$\tilde{S}_a[F](Rz)= \tilde{S}_a[p](Rz)+ \lambda R^n \tilde{S}_a[E_n](z)M.$$
		Therefore, all the zeros of 
		\begin{eqnarray*}
			\tilde{S}_a\left\{[F](Rz)- \beta [F](z)\right\} &=& \tilde{S}_a[F](Rz)- \beta \tilde{S}_a[F](z)\\
			&=&  \tilde{S}_a[p](Rz)+ \lambda  R^n \tilde{S}_a[E_n](z)M - \beta \left\{ \tilde{S}_a[p](z)+\lambda  \tilde{S}_a[E_n](z) M \right\}\\
			&=& \tilde{S}_a[p](Rz) - \beta \tilde{S}_a[p](z) + \lambda [R^n-|\beta|]\tilde{S}_a[E_n](z)M
		\end{eqnarray*}
		lie in $\mathbb{D}$ for $R>1,~|\lambda|>1$.\\
		This implies 
		\begin{eqnarray} \label{21}
			|\tilde{S}_a[p](Rz) - \beta \tilde{S}_a[p](z)| \le |R^n-\beta||\tilde{S}_a[E_n](z)|M \quad for \quad z \in \mathbb{C} \setminus \mathbb{D}, ~R>1.
		\end{eqnarray} 
		If inequality (\ref{21}) is not true, then there is some point $z_0 \in \mathbb{C} \setminus \mathbb{D}$ such that 
		\begin{eqnarray}
			|\tilde{S}_a[p](Rz_0) - \beta \tilde{S}_a[p](z_0)| > |R^n-\beta||\tilde{S}_a[E_n](z_0)|M \quad for \quad z_0 \in \mathbb{C} \setminus \mathbb{D}, ~R>1.
		\end{eqnarray}
		Take 
		\begin{eqnarray*}
			\lambda= -\frac{\tilde{S}_a[p](Rz_0) - \beta \tilde{S}_a[p](z_0)}{\left\{R^n-\beta\right\}\tilde{S}_a[E_n](z_0) M},
		\end{eqnarray*}
		such that $\lambda \in \mathbb{C} \setminus \mathbb{D}$ and with such choice of $\lambda$ we have for $z_0 \in \mathbb{C} \setminus \mathbb{D}$ 
		\begin{eqnarray*}
			\tilde{S}_a \left\{ [F](Rz_0)- \beta [F](z_0)\right\}=0
		\end{eqnarray*}
		which is a contradiction. Hence, we get
		\begin{eqnarray}
			|\tilde{S}_a[p](Rz) - \beta \tilde{S}_a[p](z)| \le |R^n-\beta||\tilde{S}_a[E_n](z)| \max_{z \in B(\mathbb{D})}|p(z)| \quad for \quad z \in \mathbb{C} \setminus \mathbb{D}.
		\end{eqnarray}
	\end{proof}
		\begin{proof}[Proof of Theorem \ref{Theorem 3.2}]
		Let $$\max_{z \in B(\mathbb{D})}|p(z)|=M$$
		then $|p(z)|\le M$ for $z \in \mathbb{D}$. Using Rouche's theorem, it follows that for every real or complex number $\alpha$ with $|\alpha|>1$, $F(z)=P(z)+ \alpha M$ does not vanish in $\mathbb{D}$.
		Using the Theorem \ref{Theorem 3.1}  and lemma \ref{Lemma 5}, on the polynomial $F(z)$, we get for every real or complex number $\beta$ with $|\beta|\le 1$ 
		\begin{align*}
			&\bigg|\tilde{S}_a[P(Rz) - \beta \tilde{S}_a[P](z) +n\alpha(1-\beta)|a|M\bigg| \\
			& \qquad\leq \bigg|\tilde{S}_a[Q](Rz) - \beta \tilde{S}_a[Q](z)+ \alpha(R^n-\beta)\tilde{S}_a[E_n](z)M\bigg| \quad for \quad z \in\mathbb{C} \setminus \mathbb{D},
		\end{align*}
		where $Q(z)=z^n \overline{p(\frac{1}{\bar{z}})}$.\\
		Choosing the argument of $\alpha$ in R.H.S of above inequality such that
		\begin{align*}
			&|\tilde{S}_a[Q](Rz) - \beta \tilde{S}_a[Q](z)+ \alpha(R^n-\beta)\tilde{S}_a[E_n](z)M|\\& =|\alpha||(R^n-\beta)|\tilde{S}_a[E_n](z)M - |\tilde{S}_a[Q](Rz) - \beta \tilde{S}_a[Q](z)|. 
		\end{align*}
		Therefore
		\begin{align*}
			&|\tilde{S}_a[P(Rz) - \beta \tilde{S}_a[P](z)| -n|\alpha||1-\beta||a|M \\
			& \le |\alpha||R^n-\beta|\tilde{S}_a[E_n](z)M - |\tilde{S}_a[Q](Rz) - \beta \tilde{S}_a[Q](z)|.
		\end{align*}
		This implies 
		\begin{align*}
			&|\tilde{S}_a[P(Rz) - \beta \tilde{S}_a[P](z)| +|\tilde{S}_a[Q](Rz) - \beta \tilde{S}_a[Q](z)| \\
			& \le |\alpha|\left\{ |(R^n-\beta)|\tilde{S}_a[E_n](z)+n|1- \beta||a|\right\}M.
		\end{align*}
		Now, letting $|\alpha|$ $\to1$, we get
		\begin{align}\label{24}
		\notag	& |\tilde{S}_a[P(Rz) - \beta \tilde{S}_a[P](z)| +|\tilde{S}_a[Q](Rz) - \beta \tilde{S}_a[Q](z)|\\
			& \le \left\{ |(R^n-\beta)|\tilde{S}_a[E_n](z)+n|1- \beta||a|\right\} \max_{z \in B(\mathbb{D})}|p(z)| \quad for \quad z \in\mathbb{C} \setminus \mathbb{D}.
		\end{align}
	\end{proof}
	\begin{proof}[Proof of Theorem \ref{Theorem 3.3}]
		Let $$\max_{z \in B(\mathbb{D})}|p(z)|=M,$$
		then for every real or complex number $\beta$ with $|\beta| \le 1$ and $R>1$, we have from inequality (\ref{24})
		\begin{align}
			\notag &|\tilde{S}_a[P(Rz) - \beta \tilde{S}_a[P](z)| +|\tilde{S}_a[Q](Rz) - \beta \tilde{S}_a[Q](z)|\\& \qquad
			\le \left\{ |(R^n-\beta)|\tilde{S}_a[E_n](z)+n|1- \beta||a|\right\} \max_{z \in B(\mathbb{D})}|p(z)| \quad for \quad z \in\mathbb{C} \setminus \mathbb{D}.
		\end{align}
		Also from the Lemma \ref{Lemma 5}, we have
		\begin{equation} \label{25}
			\left| \tilde{S}_{a}[P](Rz) - \beta \tilde{S}_{a}[P](z) \right| \leq \left| \tilde{S}_{a}[Q](Rz) - \beta \tilde{S}_{a}[Q](z) \right|.
		\end{equation}
		Adding $	\left| \tilde{S}_{a}[P](Rz) - \beta \tilde{S}_{a}[P](z) \right|$ on the both sides of the inequality (\ref{25}), we get
		\begin{eqnarray*}
			2 \left\{\left| \tilde{S}_{a}[P](Rz) - \beta \tilde{S}_{a}[P](z) \right| \right\}  \le |\tilde{S}_a[P(Rz) - \beta \tilde{S}_a[P](z)| +|\tilde{S}_a[Q](Rz) - \beta \tilde{S}_a[Q](z)|.
		\end{eqnarray*}
		Using the inequality (\ref{24}) in above inequality, we get
		\begin{eqnarray*}
			2 \left\{\left| \tilde{S}_{a}[P](Rz) - \beta \tilde{S}_{a}[P](z) \right| \right\} \le \left\{ |(R^n-\beta)|\tilde{S}_a[E_n](z)+n|1- \beta||a|\right\} \max_{z \in B(\mathbb{D})}|p(z)|.
		\end{eqnarray*}
		Therefore,
		\begin{eqnarray*}
			\left| \tilde{S}_{a}[P](Rz) - \beta \tilde{S}_{a}[P](z) \right|  \le\left\{ \frac{ |(R^n-\beta)|\tilde{S}_a[E_n](z)+n|1- \beta||a|}{2} \right\} \max_{z \in B(\mathbb{D})}|p(z)|
		\end{eqnarray*}
		for $ z \in\mathbb{C} \setminus \mathbb{D}$.
	\end{proof}
	\section{Declaration}
	\noindent\textbf{Conflicts of interest:} On behalf of authors, the corresponding author states that
	there is no conflict of interest.
		

\begin{thebibliography}{120}
	\bibitem[1]{AA} A. Aziz, Inequalities for the derivative of a polynomial, Proc. Amer. Math. Soc. 89 (1983), 259–266.
	\bibitem[2]{AD2}  A. Aziz and Q. M. Dawood, Inequalities for a polynomial and its derivative, J. Approx. Theory,
	54 (1988), 306–313.
	\bibitem[3]{AR1} N. C. Ankeny and T. J. Rivlin, On a theorem of S. Bernstein, Pacific J. Math. 5 (1955), 849–852.
	\bibitem[4]{SB} S. N. Bernstein, \textit{Sur l’ordre de la meilleure approximation des fonctions continues par des polynômes de degré donné, Memoires de l’Academie Royals de Belgique} 4 (1912), 1–103.
	\bibitem[5]{BB4}  S. N. Bernstein, \textit{Sur~ la ~limitation~ des ~derivees~ des~ polynomes}, C. R. Acad. Sci. Paris. 190 (1930), 338–340.
	\bibitem[6]{WM} B. I. Fatima and W. M. Shah, A generalization of an inequality concerning the Smirnov operator, Korean J. Math. 31 (2023), 55–61.
	\bibitem[7]{GS6} E. G. Ganenkova and V. V. Starkov, Variations on a theme of the Marden and Smirnov operators, differential inequalities for polynomials, J. Math. Anal. Appl. 476 (2019), 696–714.
	\bibitem[8]{EGVV}  E. Kompaneets and V. V. Starkov, Generalization of Smirnov Operator and Differential inequalities
	for polynomials, Lobachevskii Journal of Mathematics, 40, (2019), 2043–2051.
	\bibitem[9]{EL8}  P. D. Lax, Proof of a conjecture of P. Erd\"{o}s on the derivative of a polynomial, Bull. Amer. Math. Soc. 50 (1944), 509–513.
	\bibitem[10]{MM} M. Marden, Geometry of polynomials, American Mathematical Soc. 3 (1949).
	\bibitem[11]{IQ} I. Qasim, A. Liman, and W. M. Shah. Refinement of some inequalities concerning to $B_n$-operator of polynomials with restricted zeros. Periodica Mathematica Hungarica 74 (2017), 1-10.
	\bibitem[12]{QR}  Q. I. Rahman and G. Schmeisser, Analytic theory of polynomials, Oxford Clarendon Press,
	(2002).
	\bibitem[13]{SM9} V. I. Smirnov and N. A. Lebedev, Constructive theory of functions of a complex variable, (Nauka, Moscow,1964) [Russian].
	\bibitem[15]{WMS} W. M. Shah and B. I. Fatima, Bernstein-type inequalities preserved by modified Smirnov Operator, Korean J. Math., 30 (2022), 305-313.
	\bibitem[14]{WL} W. M. Shah and A. Liman, An operator preserving inequalities between polynomials, J. Inequal. Pure Apple. Math, 9 (2008).
	\end{thebibliography}
\end{document}